\newcommand*\lo[1]{\mathcal{#1}}
\begin{document}
\title{The cumulative hierarchy in \\ Homotopy Type Theory}

\author{Ioannis Eleftheriadis\inst{1}\orcidID{0000-0003-4764-8894}}
\authorrunning{I. Eleftheriadis}
%
\institute{Mathematical Institute, University of Oxford 
\email{ioannis.eleftheriadis@univ.ox.ac.uk}\\
}
\maketitle

\begin{abstract}
We explore the cumulative hierarchy $V$ defined in Chapter 10 of the HoTT book. We begin by showing how to translate formulas of set theory in HoTT, and proceed to examine which axioms are satisfied in $V$. In particular, we show that $V$ models ZF$^-$ in HoTT+PR, while LEM is required to obtain full ZF. Finally, we attempt to model constructive set theories in V, and although this is achieved for ECST, we only obtain IZF and CZF with LEM.
\end{abstract}

\section{Introduction}

Homotopy Type Theory (HoTT) is an emerging field of study within mathematics and theoretical computer science, based on a connection between abstract homotopy theory and intensional type theory. This connection manifests itself in the form of a duality; on the one hand HoTT can be seen as an interpretation and extension of  Martin-Löf Type Theory (MLTT) using concepts and ideas from homotopy theory, while on the other it can be seen as an internal language for homotopy theory and higher category theory. Moreover, HoTT offers a homotopical framework for the foundation of mathematics, which differs from the standard set-theoretic foundations in many respects. Here, a central role is played by Voevodsky’s \emph{Univalence Axiom} (UA) which essentially allows isomorphic objects to be identified, something that is frequently carried out in informal mathematical practice. While it is believed that these univalent foundations will eventually become a viable alternative to set theory, it is not clear a priori how the two compare in terms of consistency strength. 

A large body of work in the subject is devoted to building models of (extensions of) HoTT within the current foundations. However, there seem to be few results on models of set theory within HoTT. This is partly due to the fact that this side of the question sits awkwardly between type theory and set theory, at times requiring expertise in the former and at other times in the latter. Furthermore, there is ambiguity as to how the underlying logic should be interpreted. Aczel's original model of set theory in MLTT (\cite{aczel}) used a propositions-as-types interpretation, which is strong enough to allow for full Constructive Zermelo-Fraenkel (CZF) set theory to be modelled. Other pieces of work treat the logic differently, resulting in models for different set theories.

In the homotopical setting, the main point of reference is the 10th chapter of \cite{hott}. There, a "cumulative hierarchy of sets" is constructed as a higher inductive type. Despite its name, the construction of this type does not reflect the way that the standard Von Neumann hierarchy is built. This cumulative hierarchy instead functions as a fixed hierarchy of small sets relative to some ambient type universe, much like \emph{Zermelo-Fraenkel algebras} do in algebraic set theory. Furthermore, the book implicitly uses a propositions-as-$(-1)$-types interpretation for the underlying logic, which although very strict, permits the possibility of consistently extending HoTT with a corresponding law of excluded middle. 

The purpose of this work is to clarify and partially extend Section 10.5 of the HoTT book. More precisely, we begin by explicitly providing semantics for relational languages in HoTT, using the propositions-as-$(-1)$-types interpretation. After reviewing the construction of $V$ and some of its properties, we proceed to translate axioms of set theory and explore which of these hold in $V$. For all of the axioms of ZF but Replacement and Separation, these translations fully correspond to the ones given in the book, while we provide and prove correspondences for the other two. Next, we show how under additional assumptions we may recover Powerset, Foundation, and Choice, whose proofs, although straightforward, do not appear to be in the literature. 

The final section is devoted to constructive set theories, and contains a combination of new results and old results translated to this homotopical setting. Amongst other things, we prove that $V$ is a model of \emph{Elementary Constructive Set Theory} (ECST), which merely rests on showing that $V$ models Strong Infinity, as $\Delta_0$-Separation is essentially covered in the book. We then briefly attempt to see if $H(x)$, the class of sets hereditarily an image of a set in $x$, is a set in $V$, answering this in the positive for $\omega$ as well as for general $x$ under LEM. Finally, we consider the constructive set theories CZF and IZF with their much-discussed Collection principles. Although we are unable to show that any of these holds in V, we observe that they may be derived in HoTT+LEM, and are therefore consistent with HoTT. 

We shall work in HoTT:= MLTT + UA + HITs with two type universes $U:U'$. Recall that a type $P:U$ is called a mere proposition (or (-1)-type) if $isProp(P):\equiv \prod_{x,y:P} x=y$ is inhabited, and $S:U$ is called an h-set (or 0-type) if $isSet(S):\equiv \prod_{x,y:S} isProp(x=y)$ is inhabited. We let $Prop_U :\equiv \sum_{P:U} isProp(P)$ and $Set_U:\equiv \sum_{S:U} isSet(S)$, and we often write $P:Prop_U$ to mean $P:U$ and $isProp(P)$ is inhabited, and likewise for $Set_U$. Also, $\exists$ and $\lor$ refer to the propositional truncation of $\Sigma$ and $+$, and "merely" denotes a propositional truncation. The following axioms may be consistently assumed:

\begin{itemize}
	\item Propositional Resizing,
	\begin{center}
PR $:\equiv  Prop_U \simeq Prop_{U'}$;
\end{center}	 

	\item Law of Excluded Middle,
	\begin{center}
 LEM$:\equiv \prod_{P:U}(isProp(P) \implies (P + \neg P))$;
\end{center}	

	\item Axiom of Choice,
\begin{center}
AC$:\equiv \prod_{S:U}\prod_{X:U}\prod_{Y:X\to U}(isSet(S) \land \prod_{x:X} isSet(Y(x))$\\$ \implies ((\prod_{x:X}\| Y(x)\|)\to \| \prod_{x:X}Y(x)\| ))$.
\end{center} 
\end{itemize}

Finally, we recall the following sets of axioms, expressed in the language of set theory (LST) $\lo L=\{\in\}$:
\begin{center}
$\textbf{ZF}^-$ = Extensionality + Empty Set + Pairing + Union + Infinity + \\ Powerset + Separation Scheme + Replacement Scheme 
\end{center}
\begin{center}
$\textbf{ZF}$ = $\textbf{ZF}^-$ + Foundation
\end{center}
\begin{center}
$\textbf{ZFC}$ = $\textbf{ZF}$ + Choice
\end{center}

\section{Homotopical semantics}

\begin{definition}
Let $\lo L$ be a finite relational language with equality. Assume that we have a tuple $\lo T:U$ consisting of a set $T:Set_U$ and functions $R_T: T^n \to Prop_U$ for each n-ary relation symbol $R$ of $\lo L$. For $\lo T$ as above, we define $\llbracket-\rrbracket_\lo T$ on formulas of $\lo L$ by induction on their structure:

\begin{itemize}
	\item $\llbracket x=y\rrbracket_\lo T :\equiv \lambda xy. x=_T y $
	\item $\llbracket R(x_1,...,x_k)\rrbracket_\lo T :\equiv R^n_T$, for all n-ary relation symbols
	\item $\llbracket\neg \phi\rrbracket_\lo T:\equiv \lambda \bar x. (\llbracket\phi\rrbracket_\lo T (\bar x) \to \textbf{0})$
	\item $\llbracket\phi \to \psi\rrbracket_\lo T:\equiv \lambda \bar x.(\llbracket\phi\rrbracket_\lo T (\bar x) \implies \llbracket\psi\rrbracket_\lo T (\bar x) )$
	\item $\llbracket\phi \land \psi\rrbracket_\lo T :\equiv \lambda \bar x.(\llbracket\phi\rrbracket_\lo T(\bar x) \land \llbracket\psi\rrbracket_\lo T(\bar x) ) $
	\item $\llbracket\phi \lor \psi\rrbracket_\lo T :\equiv \lambda \bar x.(\llbracket\phi\rrbracket_\lo T (\bar x) \lor \llbracket\psi\rrbracket_\lo T (\bar x))$
	\item $\llbracket\exists x \phi(x)\rrbracket_\lo T :\equiv \exists(x:T).\llbracket\phi\rrbracket_\lo T(x)$
	\item $\llbracket\forall x \phi(x)\rrbracket_\lo T :\equiv \forall(x:T).\llbracket\phi\rrbracket_\lo T(x) $
\end{itemize}

\end{definition}
We have slightly simplified the situation by assuming FV($\phi)=$ FV($\psi$), but it is easy to see how this definition works in general. It also follows that $\llbracket \phi \rrbracket_{\lo T} : T^m \to Prop_U$, where $m = \#$FV$(\phi)$. Although this translation could be defined for arbitrary types $T$, we would like $\llbracket\phi\rrbracket_\lo T$ to map into $Prop_U$, and so for all $x,y:T, x=_Ty$ must be a mere proposition, i.e. $T$ a set. 

\begin{definition}
Let $\lo L$ and $\lo T$ be as above. We call $\lo T$ a \emph{HoTT $\lo L$-structure}, and for $\phi \in Form(\lo L)$ such that $\llbracket\phi\rrbracket_\lo T : T^k \to Prop_U$ we write

\begin{center}
$\lo T \models \phi$ if and only if $\forall(t_1,...,t_k:T).\llbracket\phi\rrbracket_\lo T(t_1,...,t_k)$ is inhabited. 
\end{center}
For $S\subseteq Form(\lo L)$, we write $\lo T \models S$ if and only if $\lo T \models \phi$, for all $\phi \in S$.
\end{definition}

This translation provides sound semantics for Intuitionistic Predicate Logic (IPL).
\begin{theorem}[Soundness for IPL and HoTT]
Let $\lo L, \phi$ be as above and $S\subseteq Form(\lo L)$. Then: 
\begin{center}
$S \vdash_{IPL} \phi$ implies that for all HoTT $\lo L$-structures $\lo T$ such that $\lo T \models S$,  $\lo T \models \phi$.
\end{center}
\end{theorem}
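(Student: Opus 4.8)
The plan is to prove soundness by induction on the length of the derivation $S \vdash_{IPL} \phi$. Since the semantics $\llbracket-\rrbracket_{\lo T}$ maps formulas into $Prop_U$ and interprets the logical connectives via their corresponding type-theoretic operations (implication as function types, conjunction as products, disjunction and existentials as propositional truncations), the key observation is that each inference rule of IPL corresponds to a provable type-theoretic implication between the relevant mere propositions. Thus it suffices to verify, for each axiom scheme and each rule of a suitable Hilbert-style or natural-deduction presentation of IPL, that its translation is inhabited (respectively, that inhabitation is preserved).

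First I would fix a concrete proof system for IPL — most conveniently a natural deduction calculus — and set up the inductive hypothesis at the level of open formulas, tracking free variables via the assignment of tuples $t_1,\dots,t_k : T$. Concretely, I would strengthen the statement to: if $\Gamma \vdash_{IPL} \phi$ in context with free variables among $x_1,\dots,x_k$, then for every HoTT $\lo L$-structure $\lo T$ and every tuple $\bar t : T^k$, there is a function from the product $\prod_{\gamma \in \Gamma} \llbracket\gamma\rrbracket_{\lo T}(\bar t)$ into $\llbracket\phi\rrbracket_{\lo T}(\bar t)$. The base case handles axioms and the assumption rule; the inductive step handles each rule. For the propositional connectives this is essentially the Curry–Howard correspondence: $\to$-introduction and elimination are $\lambda$-abstraction and application, $\land$ corresponds to pairing and projection, and $\mathbf 0$-elimination gives ex falso. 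The only subtlety is that, since we work with mere propositions, disjunction and existential quantification are truncated, so their elimination rules must be justified using the recursion principle of propositional truncation, which is legitimate precisely because the target $\llbracket\phi\rrbracket_{\lo T}(\bar t)$ is itself a mere proposition.

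The quantifier rules require care with the variable bookkeeping. For $\forall$-introduction and $\exists$-elimination I would use the fact that a derivation valid for a fresh variable yields, by the inductive hypothesis applied uniformly over all $t:T$, a dependent function $\prod_{t:T}(-)$, which is exactly the interpretation of $\forall$; and dually, eliminating a truncated $\Sigma$ into a proposition is again justified by truncation recursion. The equality axioms (reflexivity and the substitution/Leibniz schemes) are validated by the corresponding properties of the identity type $=_T$: reflexivity by $\mathsf{refl}$, and substitution by transport, noting that $\llbracket\phi\rrbracket_{\lo T}$ respects propositional equality since it lands in $Prop_U$ where transported proofs are automatically identified.

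**The hard part will be** not any single rule but the careful treatment of contexts and free-variable substitution throughout the induction — in particular ensuring that the translation commutes with syntactic substitution of terms for variables (a substitution lemma $\llbracket\phi[x/y]\rrbracket_{\lo T}(\bar t) = \llbracket\phi\rrbracket_{\lo T}(\bar t')$), which is what makes the quantifier and equality cases go through. This is the familiar but technically delicate bureaucracy of any soundness proof, and since the connective cases are a routine transcription of Curry–Howard into the propositionally-truncated setting, I expect the bulk of the real work and the only genuine risk of error to lie there. I would therefore state the substitution lemma explicitly first and prove it by a separate induction on formula structure, then invoke it freely in the main argument.
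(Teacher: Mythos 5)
Your proposal is correct and follows essentially the same route as the paper, which simply performs the induction over the axioms and rules of the IPL calculus in \cite{rathjen} (section 2.5) and asserts that each translation is ``trivially inhabited'' --- your truncation-recursion justification for $\lor$/$\exists$-elimination, transport for the equality axioms, and the substitution lemma are exactly the details that one-line proof suppresses. No gap: you have fleshed out, rather than diverged from, the paper's argument.
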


\begin{proof}
We consider the axioms and rules of IPL given in \cite{rathjen}, section 2.5. Their translations in HoTT are trivially inhabited, and hence a proof in IPL produces a deduction in HoTT. 
\end{proof}

This establishes that we may internalise proofs in IPL as deductions in HoTT. Naturally, one may ask if there exists an analogous completeness theorem. A potential proof of this could invoke the completeness of IPL with respect to the Kripke semantics given in \cite{rathjen}, and a translation of Kripke structures in HoTT.

\section{V in HoTT}

Recall the definition of $V$ in section 10.5 of \cite{hott}. This is a type in $U'$ with terms:
\begin{itemize}
	\item \textbf{set}$: \prod_{A:U} (A\to V)\to V$
	\item \textbf{setext}$:\prod_{A,B:U} \prod_{f:A\to V} \prod_{g:B \to V} Eq\_img(f,g) \to \textbf{set}(A,f) = \textbf{set}(B,g)$
	\item \textbf{0-trunc}$:\prod_{x,y:V} \prod_{p,q:x=y} (p=q)$,
\end{itemize}
where 
\begin{center}
$Eq\_img(f,g) := (\forall (a:A).\exists(b:B).f(a)=g(b)) \land (\forall(b:B).\exists(a:A).f(a)=g(b))$
\end{center}

The induction principle for $V$ says that, given $P: V\to Set_U$, in order to construct $\phi: \prod_{x:V}P(x)$, it suffices to give the following.
\begin{itemize}
	\item For any $f:A\to V$, assume as given $\phi(a)$ for all $a:A$ and construct $\phi(set(A,f))$.
	\item For any $f:A\to V$ and $g:B\to V$ satisfying Eq$\_$img$(f,g)$, assume as given $\phi(f(a))$ and $\phi(g(b))$ for all $a:A, b:B$ and
\begin{center}
$\forall(a:A).\exists(b:B).\exists(p:f(a)=g(b)).\phi(f(a))=_p^P \phi(g(b)))$ \\ $\land \forall(b:B).\exists(a:A).\exists(p:f(a)=g(b)).\phi(f(a))=_p^P \phi(g(b)))$,
\end{center}
			and construct a dependent path $\phi($set$(A,f))=_{setext(A,B,f,g)}^P\phi($set$(B,g))$.
\end{itemize}

With this, we recover the $\in$ relation, applying induction to the second argument.
\begin{center}
$x \in set(A,f):\equiv (\exists(a:A).x=f(a))$,
\end{center}
Hence, $(V, \in)$ is an $\in$-structure in the sense of definition 1.2. We call any mere predicate $P:V \to Prop$ a \emph{class}, and we say that a class \emph{U-small} if its codomain is $Prop_U$. We also call $P$ a \emph{pure class} if it is the image of a $\in$-formula with one free variable, i.e. $P \equiv \llbracket \phi(x) \rrbracket_V : V \to Prop$. We may define a $U$-small resizing of the identity relation. 

\begin{definition}
Define the \textbf{bisimulation} relation $\sim : V \times V \to Prop_U$ using induction on both arguments, and letting
\begin{center}
$set(A,f)\sim set(B,g) :\equiv (\forall(a:A).\exists(b:B).f(a)\sim g(b)) \land$ \\$ (\forall(b:B).\exists(a:A).f(a)\sim g(b))$.
\end{center}
Similarly, we let $\tilde \in: V\times V \to Prop_U$ be $(x,set(A,f)) \mapsto \exists(a:A).x \sim f(a)$.
\end{definition}

\begin{lemma}
For any $u,v :V$ we have $(u=_V v) =_{U'} (u \sim v)$.
\end{lemma}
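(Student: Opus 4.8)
The plan is to prove the two types are equal by exhibiting an equivalence and invoking univalence. First I would observe that both sides are mere propositions: $u \sim v$ is one by construction, since $\sim$ takes values in $Prop_U$, whereas $u =_V v$ is one because the \textbf{0-trunc} constructor makes $V$ an h-set. For mere propositions a logical equivalence already yields an equivalence (any map between propositions admitting a map in the opposite direction is automatically one), so by univalence it suffices to establish the bi-implication $(u =_V v) \leftrightarrow (u \sim v)$. This reduces the whole coherence problem to pure logic and lets us ignore the higher structure of $V$.

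For the forward direction I would first prove that $\sim$ is reflexive, i.e.\ $\prod_{u:V}(u \sim u)$, by induction on $V$ with motive $u \mapsto (u \sim u)$. Since this motive lands in $Prop_U$, it is in particular a family of sets, so the path-constructor obligations (for \textbf{setext} and \textbf{0-trunc}) are discharged automatically, leaving only the point case: given $f : A \to V$ together with $f(a) \sim f(a)$ for every $a:A$, one unfolds $set(A,f)\sim set(A,f)$ and witnesses each existential by the diagonal choice $a \mapsto a$. With reflexivity in hand, the implication $(u =_V v) \to (u \sim v)$ follows by transporting the witness of $u \sim u$ along a path $u =_V v$.

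The backward direction $(u \sim v) \to (u =_V v)$ carries the real content, and I would prove the family $\prod_{u,v:V}(u \sim v \to u =_V v)$ by induction on both arguments. The target is again a mere proposition (as $V$ is an h-set), so the motive is a family of propositions and the induction reduces to the point–point case. There, with $u \equiv set(A,f)$ and $v \equiv set(B,g)$, the inductive hypotheses supply, for all $a:A$ and $b:B$, the implication $f(a) \sim g(b) \to f(a) =_V g(b)$. Unfolding $set(A,f) \sim set(B,g)$ gives the two truncated statements $\forall(a:A).\exists(b:B).f(a) \sim g(b)$ and its symmetric counterpart; applying the hypotheses underneath the truncations (legitimate, since the goal is propositional) converts each $\sim$ into $=_V$, producing exactly $Eq\_img(f,g)$. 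The constructor \textbf{setext} then yields $set(A,f) =_V set(B,g)$.

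The main obstacle I anticipate is organisational rather than mathematical: correctly setting up the double induction so that the inductive hypotheses appear in the usable form $f(a) \sim g(b) \to f(a) =_V g(b)$, and justifying cleanly that the \textbf{setext}- and \textbf{0-trunc}-cases of both inductions are automatic because every motive involved is a family of mere propositions (equivalently, of sets). Once this scaffolding is in place each case is a short unfolding, and the conceptual payoff is that the $U'$-valued identity type of $V$ is equal to the $U$-small relation $\sim$, thereby exhibiting $=_V$ as $U$-small.
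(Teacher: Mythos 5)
Your proposal is correct and is essentially the proof the paper defers to (Lemma 10.5.5 of the HoTT book): both sides are mere propositions, so by univalence a bi-implication suffices; the forward direction comes from reflexivity of $\sim$ plus transport, and the backward direction from a double $V$-induction whose point--point case assembles $Eq\_img(f,g)$ from the inductive hypotheses and applies \textbf{setext}. Your observation that all path-constructor obligations are automatic because every motive is a family of mere propositions is exactly the justification used there as well.
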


\begin{proof}
Lemma 10.5.5 in \cite{hott}.
\end{proof}

With this, one proves the following.

\begin{lemma}
For every $u:V$ there is a given $[u]:Set_U$ and a function $m_u : [u] \to V$ such that $u = set([u], m_u)$ and $m_u$ is monic, i.e. $m \circ f = m \circ g \implies f = g$. 
\end{lemma}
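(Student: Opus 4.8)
The plan is to apply the induction principle for $V$ to the family
\[
P(u) :\equiv \sum_{C:Set_U}\sum_{m:C\to V}\big(isMonic(m)\times (u = set(C,m))\big),
\]
and the first thing I would establish is that each $P(u)$ is a mere proposition. Suppose $(C,m)$ and $(C',m')$ are two monic presentations of the same $u$. From $set(C,m)=set(C',m')$ and the preceding lemma, which identifies $u=_V v$ with $u\sim v$, I would unfold the bisimulation on sets to learn that $m$ and $m'$ have the same image: every $m(c)$ equals some $m'(c')$ and vice versa. Here monicity of $m'$ is load-bearing, since it makes the matching witness $c'$ unique and so lets the truncated existence be resolved into an actual function $C\to C'$; symmetrically one gets $C'\to C$, and these are mutually inverse. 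Univalence then yields $(C,m)=(C',m')$, and the remaining components of $P(u)$ are propositions by construction. With $P$ prop-valued, the obligation attached to the path constructor $setext$ is automatic, and (with no inductive hypothesis needed) it remains only to construct an element of $P(set(A,f))$ for arbitrary $A:U$ and $f:A\to V$.

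For this point case the naive choice $[set(A,f)]:\equiv A$ fails, because $f$ need not be injective. Instead I would quotient out the redundancy: define $a\approx a' :\equiv f(a)\sim f(a')$ on $A$, which is an equivalence relation since $\sim$ is, and set $[set(A,f)]:\equiv A/{\approx}$. The crucial point is that $\approx$ takes values in $Prop_U$; this is precisely why the $U$-small resizing $\sim$ of the identity type was introduced, as the genuine identity type $f(a)=_V f(a')$ lives in $U'$ and would push the quotient out of $Set_U$. Hence $A/{\approx}:Set_U$. The map $m_{set(A,f)}:A/{\approx}\to V$ is then the factorisation of $f$ through the quotient, which exists because $a\approx a'$ implies $f(a)=f(a')$ by the bisimulation lemma.

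Finally I would discharge the two remaining clauses. Injectivity of $m_{set(A,f)}$ is immediate: if $m([a])=m([a'])$ then $f(a)=f(a')$, hence $f(a)\sim f(a')$, hence $[a]=[a']$; and for a map between sets this injectivity is equivalent to the stated monicity condition (by testing against the unit type and using function extensionality). For the identity $u = set([u],m_u)$ I would invoke $setext$, observing that $f$ and $m_u$ have literally the same image, so that $Eq\_img(f,m_u)$ holds. I expect the genuine obstacle to be conceptual rather than computational, and to lie in the two places just flagged: the use of the $U$-small relation $\sim$ to keep $[u]$ inside $Set_U$, and the propositionality of $P$ — which is what allows the higher constructor of $V$ to be ignored — this latter resting entirely on the rigidity that monicity imposes on presentations.
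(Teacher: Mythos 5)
Your proposal is correct and follows essentially the same route as the proof the paper defers to (Lemma 10.5.6 of the HoTT book, with details in Ledent \S 3.2): show the type of monic presentations is a mere proposition (using monicity to strip the truncations in $Eq\_img$ and univalence to identify the two presentations), so that $V$-induction reduces to the point case, handled by quotienting $A$ by the $Prop_U$-valued relation $a\approx a' :\equiv f(a)\sim f(a')$ and factoring $f$ through the quotient. You also correctly flag the two load-bearing points — the $U$-smallness of $\sim$ keeping the set-quotient in $Set_U$, and propositionality of the family trivialising the $setext$ obligation — so nothing is missing.
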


\begin{proof}
See \cite{hott}, Lemma 10.5.6, and \cite{ledent} section 3.2.
\end{proof}

Using the above propositions, we obtain the following.

\begin{proposition}
$(V, \in)\models$ Extensionality + Empty Set + Pairing + Union + Infinity + Exponentiation + $\in$-induction. Furthermore we have:
\begin{enumerate}
	\item H-Replacement: Given any $r:V \to V$ and $x:V$, there merely exists a $w:V$ such that $y \in w \iff \exists(z:V).z \in x \land y=r(z)$ for all $y:V$.
	\item U-Separation: Given any $a:V$ and $C:V\to Prop_U$, there merely exists a $w:V$ such that $x \in w \iff x \in a \land C(x)$ for all $x:V$.
\end{enumerate}
\end{proposition}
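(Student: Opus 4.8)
The plan is to split the listed axioms into two groups. The axioms Extensionality, Empty Set, Pairing, Union, Infinity, Exponentiation and $\in$-induction are exactly those already established for $V$ in section 10.5 of \cite{hott}; for these I would only have to check that the book's formulations coincide with the translations $\llbracket - \rrbracket_V$ of Definition 1.1, which is a direct unfolding in each case. The two genuinely new items are H-Replacement and U-Separation, and both rest on the monic $U$-small resizing of the last lemma (Lemma 10.5.6 in \cite{hott}): every $u:V$ may be written $u = set([u],m_u)$ with $[u]:Set_U$ and $m_u$ monic.

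For H-Replacement, given $r:V\to V$ and $x:V$, I would first apply the resizing lemma to write $x = set([x],m_x)$. The crucial observation is that $[x]:U$ is $U$-small, so that $r\circ m_x:[x]\to V$ is an admissible family and
\begin{center}
$w :\equiv set([x], r\circ m_x):V$
\end{center}
is a legitimate term. It then remains to verify the biconditional. Unfolding membership, $y\in w$ is $\exists(a:[x]).\, y = r(m_x(a))$, while $z\in x$ is $\exists(a:[x]).\, z = m_x(a)$; since the goal is a mere proposition I may use truncation recursion to extract witnesses, and both directions follow by choosing $z :\equiv m_x(a)$ (forward) or transporting $y = r(z)$ along $z = m_x(a)$ (backward).

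For U-Separation, given $a:V$ and a $U$-small $C:V\to Prop_U$, I would again write $a = set([a],m_a)$ and form the index type
\begin{center}
$A' :\equiv \sum_{t:[a]} C(m_a(t))$.
\end{center}
This is $U$-small — which is what the constructor $set$ demands — precisely because $[a]$ is a $U$-set and each $C(m_a(t))$ is a $U$-proposition (and it is moreover a set). Setting $f' :\equiv m_a\circ\mathrm{pr}_1:A'\to V$ and $w :\equiv set(A',f')$ gives the candidate. Checking $x\in w \iff x\in a \land C(x)$ is then routine: the forward direction reads off the index $t$ and the proof of $C$ from a witness in $A'$, while the backward direction pairs the witness $t$ of $x\in a$ with the transport of $C(x)$ along $x = m_a(t)$ to produce an element of $A'$.

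The main obstacle throughout is universe and size bookkeeping rather than logical content: $set$ requires a $U$-small index type, which is exactly why the resizing lemma (producing the small $[u]$) is indispensable, why Replacement must be phrased for an honest function $r:V\to V$ rather than a possibly non-small predicate, and why Separation is restricted to $U$-small classes $C$. A secondary point that keeps the argument clean is that $V$ is a $0$-type, so $r\circ m_x$ and $m_a\circ\mathrm{pr}_1$ are genuine functions into $V$ with no higher coherence to track; all remaining work is manipulation of mere propositions.
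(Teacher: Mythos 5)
Your proposal is correct, and its core constructions coincide with those of the proof the paper defers to, namely Theorem 10.5.8 of the HoTT book: for H-Replacement, the witness $w :\equiv set(-, r\circ -)$ over a presentation of $x$; for U-Separation, the index type $\sum_{t} C(f(t))$ with the composed map into $V$. The one genuine difference is how you obtain the presentation: the book argues by $V$-induction on $x$ (legitimate because the goal ``there merely exists $w$'' is a mere proposition, so the path obligations for $setext$ are vacuous), whereas you invoke the monic resizing lemma (the paper's Lemma 2, i.e.\ Lemma 10.5.6 in the book) to write $x = set([x], m_x)$ outright. Your route buys an induction-free argument at the level of these two items, but note that monicity of $m_x$ and $m_a$ is never used in either verification --- any $U$-small presentation would do --- so you are invoking strictly more than needed, and since the resizing lemma is itself proved by $V$-induction, nothing is saved foundationally. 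Your size bookkeeping is the right focus and is handled correctly: $[x]:U$ makes $set([x], r\circ m_x)$ well-formed, and $\sum_{t:[a]} C(m_a(t))$ lands in $U$ because both components do (the further observation that this $\Sigma$-type is a set is harmless but unnecessary, as the constructor $set$ accepts any $A:U$). For the first group of axioms, your plan of citing the book and checking that its formulations unfold to the translations $\llbracket - \rrbracket_V$ matches the paper's own stance, which notes in the introduction that for these axioms the translations ``fully correspond'' to the book's statements.
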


\begin{proof}
Theorem 10.5.8 in \cite{hott}.
\end{proof}
The last two differ from the usual Replacement and Separation. Indeed, the Replacement scheme says that for all $\phi(x,y)$ with two free variables, we have
\begin{center}
$\forall S(\forall x \exists ! y \phi(x,y)  \implies \exists w \forall v (v \in w \iff \exists u \in S \phi(u,v)))$.
\end{center}
We can see that $\phi(x,y)$ plays the role of a class function. It is not clear, however, that such a class function, which is represented by a formula, gives rise to an actual function $r:V\to V$, which is needed in the version of Replacement given above. Thankfully, this works out just fine.

\begin{proposition}
H-Replacement implies $\llbracket\chi\rrbracket_V$ for all instances of the Replacement scheme.
\end{proposition}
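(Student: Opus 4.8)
The plan is to reduce any instance $\chi$ of the Replacement scheme, once its antecedent is assumed, to H-Replacement by extracting an honest function $r:V\to V$ from the class function represented by the formula $\phi$. Fix such an instance, with formula $\phi(x,y)$, and recall that $\exists!y\,\phi(x,y)$ abbreviates $\exists y(\phi(x,y)\land \forall y'(\phi(x,y')\to y=y'))$. Suppose $(V,\in)$ validates the hypothesis, so that for every $x:V$ we have a mere proof of $\exists(y:V).\big(\llbracket\phi\rrbracket_V(x,y)\land \forall(y':V).\llbracket\phi\rrbracket_V(x,y')\to y=_V y'\big)$. This hypothesis is independent of the bounding set, so I would establish $r$ once and for all before fixing $S$.

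The first and central step is to observe that the $\Sigma$-type
$$E(x):\equiv \sum_{y:V}\Big(\llbracket\phi\rrbracket_V(x,y)\times \prod_{y':V}\big(\llbracket\phi\rrbracket_V(x,y')\to y=_V y'\big)\Big)$$
is a mere proposition. Indeed, given two inhabitants, the uniqueness clause of either one identifies their first components in the $0$-type $V$, and the remaining data are themselves mere propositions; hence $isProp(E(x))$. Consequently the propositional truncation is an equivalence, $\|E(x)\|\simeq E(x)$, and the assumed hypothesis $\prod_{x:V}\|E(x)\|$ upgrades to $\prod_{x:V}E(x)$. Projecting the first component yields a genuine function $r:V\to V$ together with a witness $\prod_{x:V}\llbracket\phi\rrbracket_V(x,r(x))$ and the uniqueness property. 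This is precisely an application of unique choice, valid because $V$ is a set, and it is what converts the formula-represented class function into the data required by H-Replacement.

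With $r$ in hand, I would fix an arbitrary $S:V$ and apply H-Replacement to $r$ and $S$, obtaining a $w:V$ with $v\in w\iff \exists(z:V).z\in S\land v=r(z)$ for all $v$. It then remains to verify the conclusion $\forall v(v\in w\iff \exists u\in S.\,\phi(u,v))$ of $\llbracket\chi\rrbracket_V$. For the forward direction, $v\in w$ gives some $z\in S$ with $v=r(z)$; transporting the mere proof $\llbracket\phi\rrbracket_V(z,r(z))$ along this identity yields $\llbracket\phi\rrbracket_V(z,v)$, so $z$ witnesses the right-hand side. Conversely, from $u\in S$ with $\llbracket\phi\rrbracket_V(u,v)$, the uniqueness clause applied to $r(u)$, which satisfies $\llbracket\phi\rrbracket_V(u,r(u))$, gives $v=_V r(u)$, whence $v\in w$. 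Since all of the types involved are mere propositions, the truncations cause no difficulty and the biconditional follows.

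The main obstacle is the extraction of $r$, namely verifying that $E(x)$ is a mere proposition so that unique choice applies; everything downstream is routine manipulation of $\in$ and transport along $v=r(z)$. I expect no subtlety beyond recording carefully that $V$ being a $0$-type is exactly what makes the uniqueness clause collapse $E(x)$ to a proposition.
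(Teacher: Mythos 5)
Your proposal is correct and follows essentially the same route as the paper: both define the $\Sigma$-type of unique existence (your $E(x)$ is the paper's $\Theta(x)$), show it is a mere proposition using that $V$ is a $0$-type so that the truncation $\exists!(y:V).P(x,y)$ can be eliminated, extract $r:V\to V$ by first projection, and then apply H-Replacement, identifying $v = r(u)$ with $\llbracket\phi\rrbracket_V(u,v)$ via the uniqueness clause. Your only (immaterial) deviation is establishing $r$ before fixing $S$, and you spell out the final biconditional slightly more explicitly than the paper does.
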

\begin{proof}
Let $\phi(x,y)$ be a formula with two variables in an instance of Replacement. We write $P:\equiv \llbracket \phi \rrbracket_V : V \times V \to Prop$, and fix $S:V$. We begin by noting that $\Theta(x) :\equiv \sum_{y:V} (P(x,y) \times \prod_{z:V} (P(x,z) \to z=y))$ is a mere proposition. Indeed, if $(y_1,r_1), (y_2,r_2) : \Theta$ then $r_1 = r_2$ as $P(x,y) \times \prod_{z:V} (P(x,z) \to z=y)$ is a mere proposition, and furthermore $pr_2(r_1)(y_2)(pr_1(r_2)) : y_1 = y_2$. Hence $(y_1,r_1) = (y_2,r_2)$. It therefore follows that $\exists !(y:V).P(x,y) \equiv \| \Theta \| \simeq \Theta$, and so we may obtain $f: \prod_{x:V} \Theta(x)$ and define $r:V \to V$ as $\lambda x.pr_1(f(x))$. Finally, H-Replacement implies that $\exists(w:V).\forall(v:V).(v \in w \iff \exists(u \in S).v = r(u))$, but $v = r(u)$ holds if and only if $P(u,v)$, and so the conclusion follows. 
\end{proof}

Essentially, the above proof boils down to the fact that mere propositions are closed under unique existential quantification, i.e. if $P : U \to Prop_U$ then $\exists ! (x:U). P(x) :\equiv \sum_{x:U}(P(x) \land \prod_{y:U} ( P(y) \to x = y ) )$ is a mere proposition.
\
Unfortunately, $U$-Separation does not fully correspond to the Separation scheme. We call a class $C: V \to Prop$ \emph{separable} if for any $a:V$ the class $a \cap C:\equiv \{x \mid x\in a \land C(x) \}$ is a set in $V$. $U$-Separation says that $U$-small classes are separable, whereas the translation of the Separation scheme requires that any pure class be separable. Since $V:U'$, we can see that $\llbracket\phi(x)\rrbracket_V : V \to Prop_{U'}$, and so we need $U'$-separability to obtain full Separation. Assuming PR, this is no longer an issue, and in fact, we may model ZF$^-$.

\begin{theorem}[HoTT + PR]
$(V,\in)\models$ ZF$^-$.
\end{theorem}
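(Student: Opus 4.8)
The plan is to establish that $(V,\in) \models \textbf{ZF}^-$ by assembling the pieces already proven, with the sole genuine work being the upgrade of $U$-Separation to full Separation using PR. Recall that $\textbf{ZF}^- = $ Extensionality + Empty Set + Pairing + Union + Infinity + Powerset + Separation Scheme + Replacement Scheme. By the earlier Proposition (Theorem 10.5.8 of the book), $(V,\in)$ already models Extensionality, Empty Set, Pairing, Union, and Infinity, so these require no further argument. Likewise, the Replacement Scheme is handled by combining H-Replacement with the Proposition showing that H-Replacement implies $\llbracket\chi\rrbracket_V$ for every instance of the scheme. Thus four of the remaining obligations are Powerset, Separation, and the transfer of Replacement, of which only Powerset and Separation need fresh attention here.

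For the Separation Scheme, the key observation is the one already flagged in the discussion preceding the theorem: for a pure class $\llbracket\phi(x)\rrbracket_V : V \to Prop_{U'}$, we have $U'$-smallness rather than $U$-smallness, so $U$-Separation does not directly apply. Here is where PR enters. Since $\text{PR} :\equiv (Prop_U \simeq Prop_{U'})$, we may postcompose $\llbracket\phi(x)\rrbracket_V$ with the inverse of this equivalence to obtain a $U$-small class $C :\equiv e^{-1} \circ \llbracket\phi(x)\rrbracket_V : V \to Prop_U$ which is equivalent — hence by univalence equal, and in particular logically equivalent — to $\llbracket\phi(x)\rrbracket_V$ pointwise. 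First I would check that PR yields, for each $x:V$, an equivalence $C(x) \simeq \llbracket\phi(x)\rrbracket_V(x)$, so that $C(x)$ is inhabited precisely when $\llbracket\phi(x)\rrbracket_V(x)$ is. Applying $U$-Separation to this $C$ then produces, for each $a:V$, a merely-existing $w:V$ with $x \in w \iff (x \in a \land C(x))$, and substituting the equivalence gives $x \in w \iff (x \in a \land \llbracket\phi(x)\rrbracket_V(x))$, which is exactly the translation of the Separation instance for $\phi$. Care must be taken that $\phi$ may have parameters beyond $x$; these are carried as extra free variables, and the same resizing argument applies uniformly in the parameters, so the scheme follows for every instance.

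For Powerset, the situation is subtler and I expect this to be the main obstacle. The naive approach — form the class of all $u:V$ with $u \subseteq a$ and collect it into a set — runs into the same size issue as Separation, since subset-hood is a $U'$-valued proposition a priori. The natural strategy is to use Lemma 10.5.6 (the earlier Lemma giving $[a]:Set_U$ and monic $m_a : [a] \to V$ with $a = set([a], m_a)$) to identify candidate subsets with functions into $Prop_U$: a subset of $a$ should correspond to a $U$-small predicate on $[a]$, i.e. an element of $[a] \to Prop_U$. The type $[a] \to Prop_U$ lives in $U$ (being a function type between $U$-small types, using that $[a]:Set_U$ and $Prop_U : U$ under resizing), so one may form $set([a] \to Prop_U, h)$ where $h$ sends each predicate $\chi : [a] \to Prop_U$ to the $V$-set obtained by separating $a$ along $\chi$ — concretely $h(\chi) :\equiv set(\sum_{i:[a]} \chi(i),\ m_a \circ pr_1)$. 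I would then verify that $u \in set([a]\to Prop_U, h)$ holds iff $u \subseteq a$, using PR to pass between the $U'$-valued $\subseteq$ and the $U$-small predicate $\chi$ that carves $u$ out of $a$, and using Extensionality (via the bisimulation Lemma) to recover $u$ from its defining predicate. The delicate points are the well-definedness of $h$ up to the higher-inductive identifications on $V$ and the faithful correspondence between subsets of $a$ and predicates on $[a]$, which rests on $m_a$ being monic; PR is what makes the codomain $Prop_U$ rather than $Prop_{U'}$, keeping the construction $U$-small so that $set$ may be applied.

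Putting these together, once Powerset and full Separation are secured, every conjunct of $\textbf{ZF}^-$ is verified in $(V,\in)$, and the theorem follows. I would structure the writeup by citing the earlier Proposition for the unproblematic axioms and for H-Replacement, citing the Replacement Proposition for the scheme, and then giving the two PR-based arguments for Separation and Powerset in full, flagging Powerset as the step where the interplay between the monic representation of Lemma 10.5.6 and propositional resizing does the real work.
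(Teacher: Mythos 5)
Your proposal follows essentially the same route as the paper: the standard axioms and the Replacement scheme are discharged by the earlier propositions, PR resizes pure classes from $Prop_{U'}$ to $Prop_U$ so that $U$-Separation yields the full Separation scheme, and Powerset is built exactly as in the paper's proof by collecting, via the monic presentation $m_a : [a] \rightarrowtail V$, the sets $set(\sum_{t:[a]}\chi(t),\, m_a \circ pr_1)$ carved out by $U$-small predicates, with the same two-directional verification (taking $\chi :\equiv \lambda t.\, m_a(t) \in u$, resized, for one direction). The only divergence is that you index the powerset by $[a] \to Prop_U$ whereas the paper writes $r : (V \to \Omega) \to V$ with $\lo P(v) :\equiv set(V \to \Omega, r)$ --- and your choice is in fact the size-correct one, since $V \to \Omega : U'$ cannot serve as the first argument of $set$, so your version, matching the intent of the paper's $[v]_P$ construction, repairs what appears to be a slip in the paper's writeup.
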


\begin{proof}
It remains to show that $\llbracket$Powerset$\rrbracket_V \equiv \forall(v:V).\exists(\lo P(v):V).\forall(x:V).(x\in \lo P(v) \iff x\subseteq v)$ is inhabited. Let $\Omega :\equiv Prop_{U_0}$, $v:V$, and take its monic presentation $[v]\rightarrowtail V$. We would like to define a map $r : (V\to \Omega) \to V$ such that $r(P) \equiv \{ \{m_v(t) \mid t:[v] \land P(m_v(t)) \} \mid P: V\to \Omega \}$. So, we let $[v]_P :\equiv \sum_{t:[v]} P(m_v(t))$, and take r:$\equiv P \mapsto$ set$([v]_P, m_v \circ pr_1)$. Finally let $\lo P(v) :\equiv$ set$(V\to \Omega, r)$. If $x \subseteq v$: Take $P\equiv \lambda z. z \in x : V \to \Omega$. Then $r(P) \in \lo P(v)$ and for all $y:V$ we have that, $y \in r(P) \iff \exists(a: [v]_P).(y = m_v\circ pr_1(a)) \iff \exists(b:[v])(y = m_v(b) \land m_v(b) \in x) \iff y \in x$. So $x = r(P)$ by Extensionality, and hence $x \in \lo P(v)$. If $x \in \lo P(v)$: Then there merely exists $P:V \to \Omega$ such that $x = r(P)$. Hence, for all $y:V$ we have that $y \in x \iff y \in r(P) \iff \exists(a:[v]_P).(y = m_u \circ pr_1(a)) \iff \exists(b:[v]).(y = m_u(b) \land P(m_u(b))) \implies y \in v$. Hence $x \subseteq v$.  

\end{proof}

By adding LEM to our type theory, we also obtain Foundation. 

\begin{theorem}[HoTT + LEM]
$(V, \in)\models$ ZF. 
\end{theorem}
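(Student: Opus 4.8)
The plan is to reduce the statement to Foundation and then derive it from $\in$-induction using LEM. First I would observe that LEM implies PR: under LEM every mere proposition is either $\mathbf 0$ or contractible, so $Prop_U \simeq \mathbf 2$ uniformly in every universe, whence $Prop_U \simeq Prop_{U'}$. Thus the previous theorem already gives $(V,\in)\models$ ZF$^-$, and it only remains to inhabit $\llbracket$Foundation$\rrbracket_V$. A second, cheap observation is that LEM makes the semantics classical: for any $\phi$ the proposition $\llbracket\phi\rrbracket_V(\bar t):Prop_U$ is decidable, so $\llbracket\phi\lor\neg\phi\rrbracket_V$ is always inhabited. Hence $(V,\in)$ validates classical predicate logic, and the soundness theorem upgrades from IPL to CPL.

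Since the earlier Proposition records that $\in$-induction holds in $V$, and Foundation is a classical consequence of $\in$-induction, one route is simply to invoke this upgraded soundness. To make the argument explicit, I would fix $a:V$, abbreviate $\mathrm{Min}(y):\equiv y\in a \land \forall z(z\in y \to \neg(z\in a))$, and prove $\forall x\,\psi(x)$ by $\in$-induction, where $\psi(x):\equiv (x\in a \to \exists y\,\mathrm{Min}(y))$.

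For the inductive step, assume $\forall w(w\in x \to \psi(w))$ together with $x\in a$, and split, using LEM, on whether $\exists w(w\in x \land w\in a)$ holds. In the positive case such a $w$ satisfies $w\in x$, so the inductive hypothesis yields $\psi(w)$, and $w\in a$ then produces the required $\exists y\,\mathrm{Min}(y)$. In the negative case $\forall w(w\in x \to \neg(w\in a))$ holds, so $x$ is itself $\in$-minimal, i.e. $\mathrm{Min}(x)$, which again gives $\exists y\,\mathrm{Min}(y)$; note the truncated existential obtained from the case split may be eliminated into this goal since it is a mere proposition. By $\in$-induction $\psi$ holds everywhere, and feeding any witness of the nonemptiness hypothesis $\exists y(y\in a)$ into $\psi$ delivers the conclusion of Foundation.

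I expect the main obstacle to be conceptual rather than computational: $\in$-induction does \emph{not} entail Foundation intuitionistically, and the one place where classicality is genuinely used is precisely the case split on $\exists w(w\in x \land w\in a)$. The remaining care points are routine, namely confirming that the $\in$-induction validated in $V$ applies to the parametrised formula $\psi$ (it does, the scheme permitting parameters), and checking that all the instances of $\exists$, being propositional truncations landing in $Prop$, compose and eliminate correctly under LEM.
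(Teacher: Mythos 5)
Your proposal is correct and follows the paper's overall decomposition --- LEM yields PR, so the previous theorem supplies ZF$^-$, and everything reduces to inhabiting $\llbracket\mathrm{Foundation}\rrbracket_V$ via the $\in$-induction already known to hold in $V$ --- but you execute the last step differently. The paper applies $\in$-induction to the class $C(x):\equiv\neg(x\in v)$ and takes the \emph{contrapositive}, using LEM to turn the failure of $\forall x\,C(x)$ into a mere witness $x\in v$ with $x\cap v=\emptyset$; you instead run a direct, positive induction on $\psi(x):\equiv(x\in a\to\exists y\,\mathrm{Min}(y))$, localising the classical step to a single LEM case split on $\exists w(w\in x\land w\in a)$ inside the inductive step. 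These are the two standard dual derivations of Regularity from Set Induction, and yours is arguably the more careful write-up: you make explicit the LEM $\Rightarrow$ PR step that the paper leaves entirely tacit (note this needs LEM at the level of $U'$ as well as $U$, i.e.\ the usual universe-polymorphic reading of LEM, since $Prop_{U'}\simeq\mathbf{2}$ requires deciding $U'$-propositions), and you check the truncation-elimination bookkeeping that the paper glosses. One small point you share with the paper: the translated hypothesis of Foundation is $v\neq\emptyset$, not $\exists y(y\in v)$, so both arguments implicitly use LEM together with Extensionality to pass from $\neg(v=\emptyset)$ to a mere witness $y\in v$; this deserves one explicit line. Your side remark that LEM makes every $\llbracket\phi\rrbracket_V(\bar t)$ decidable and hence upgrades the soundness theorem to classical predicate logic is also correct, and would alternatively let you quote the classical equivalence of $\in$-induction and Foundation wholesale rather than rederiving it.
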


\begin{proof}
We ought to show that $\llbracket$Foundation$\rrbracket_V \equiv \forall(v:V).(v \neq \emptyset \implies \exists(x:V).(x \in v \land x\cap v = \emptyset))$ is inhabited. Take $v:V$, and let $C:\equiv \lambda(x:V).\neg(x \in v) : V \to Prop$. We use the contrapositive of $\in$-induction with $C$, to obtain an inhabitant of $\exists(v:V).\neg C(v) \implies \exists(x:V).(\neg C(x) \land \forall(y:V).(y \in x \implies C(a)))$. Hence, if $v \neq \emptyset$, we may produce $x:V$ with $x \in v$ such that $\forall y \in x, y \not\in v$, i.e. $x \cap v = \emptyset$. 
\end{proof}

A reasonable question is whether we could get ZF with weaker assumptions. Unfortunately the following implies that LEM is necessary.

\begin{proposition}
$\llbracket Foundation\rrbracket_V$ implies $LEM_U$.
\end{proposition}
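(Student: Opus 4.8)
The plan is to mimic the classical proof that Regularity entails excluded middle: given an arbitrary mere proposition, I build an element of $V$ whose $\in$-minimal element, supplied by $\llbracket\text{Foundation}\rrbracket_V$, decides that proposition. The key design choice is to encode $P$ as conditional membership of the empty set, so that whether or not $\emptyset$ survives as the minimal element tells us whether $P$ holds.

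Concretely, I would fix $P:\text{Prop}_U$ and write $\emptyset:\equiv set(\mathbf{0},!)$ and $s:\equiv set(\mathbf{1},\lambda\_.\emptyset)$ for the empty set and its singleton $\{\emptyset\}$ in $V$. Since $\mathbf{1}+P:U$, I may form $a:\equiv set(\mathbf{1}+P,f)$ with $f(\text{inl}(\star)):\equiv s$ and $f(\text{inr}(p)):\equiv\emptyset$; intuitively $a=\{\{\emptyset\}\}\cup\{\emptyset\mid P\}$. Unfolding the definition of $\in$ gives $s\in a$ unconditionally and $\emptyset\in a$ as soon as we are handed any $p:P$ (via the index $\text{inr}(p)$). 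I also note that $P+\neg P$ is itself a mere proposition because $P$ is, so the whole argument lives in $\text{Prop}_U$ and I am free to eliminate propositional truncations into the goal $P+\neg P$; together with $P:U$ arbitrary this yields exactly $\text{LEM}_U$.

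The argument then proceeds in three steps. First, $a\neq\emptyset$: from $s\in a$ together with the fact that $s\in\emptyset$ is uninhabited, the equation $a=\emptyset$ is contradictory. Second, instantiating $\llbracket\text{Foundation}\rrbracket_V$ at $a$ yields --- merely --- some $x:V$ with $x\in a$ and $x\cap a=\emptyset$. Third, since the goal $P+\neg P$ is a mere proposition, I eliminate both this outer truncation and the membership $x\in a\equiv\exists(i:\mathbf{1}+P).x=f(i)$ and case-split on $i$. If $i\equiv\text{inr}(p)$ then $p:P$, so I return $\text{inl}(p)$. If $i\equiv\text{inl}(\star)$ then $x=s=\{\emptyset\}$, whence $\emptyset\in x$; now assuming any $p:P$ would give $\emptyset\in a$ as well, so that $\emptyset\in x\cap a$, contradicting $x\cap a=\emptyset$ once this intersection is unfolded via Extensionality into $\forall(y:V).\neg(y\in x\land y\in a)$. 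Thus $\neg P$, and I return $\text{inr}$. In either case I obtain a term of $P+\neg P$.

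I expect the classical skeleton to be routine; the real care lies in the truncation bookkeeping. The delicate points are verifying that $P+\neg P$ is a genuine mere proposition (so the eliminations in the third step are legitimate), extracting the conditional membership $\emptyset\in a$ from the truncated existential underlying $\in$, and turning $x\cap a=\emptyset$ into the usable negation $\neg(\emptyset\in x\land\emptyset\in a)$ through Extensionality. This is essentially the only place where the homotopical setting asks for more than the bare set-theoretic argument, and I would treat it as the main obstacle to make fully precise.
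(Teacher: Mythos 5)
Your proof is correct, and it takes a genuinely different route from the paper's at the crucial construction step. The paper works internally to the set theory already verified in $V$: it carves $S_p :\equiv \{x \in \omega \mid x=1 \lor (x=0 \land p)\}$ out of $\omega$ using Infinity and $U$-Separation, which forces an extra subtlety --- the class involves identity on $V$, which lives in $U'$, and must first be resized to $Prop_U$ via the bisimulation relation $\sim$ before $U$-Separation applies. Foundation applied to $S_p$ then yields a minimal $z$ with $z=1 \lor (z=0 \land p)$, the first disjunct giving $\neg p$ (since $0 \in z$ and $z \cap S_p = \emptyset$ force $0 \notin S_p$) and the second giving $p$. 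You instead exploit the HIT presentation of $V$ directly, using the proposition itself as part of the index type: $a :\equiv set(\mathbf{1}+P, f)$ requires no Infinity, no Separation, and no resizing, since $\mathbf{1}+P : U$ outright. Your case analysis is on the index $i : \mathbf{1}+P$ obtained by untruncating $x \in a$, rather than on the disjunction defining membership in $S_p$, but the logical skeleton is identical: the unconditional element $\{\emptyset\}$ forces $\neg P$ by minimality, while the conditional element $\emptyset$ carries a proof of $P$. Your truncation bookkeeping is also sound: $P + \neg P$ is indeed a mere proposition (this is Exercise 3.6 of the HoTT book; the mutual exclusivity of $P$ and $\neg P$ is what makes the sum propositional), so eliminating the truncations in Foundation's conclusion and in $x \in a$ into that goal is legitimate, and unfolding $x \cap a = \emptyset$ to $\forall(y:V).\neg(y \in x \land y \in a)$ matches how the paper reads that clause. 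What the paper's route buys in exchange is that it uses only axioms already shown to hold in $V$ plus Foundation, so it is essentially an internal set-theoretic implication valid in any structure satisfying those axioms; your argument appeals to the raw constructors of $V$, which is perfectly adequate for the stated proposition about $\llbracket \text{Foundation} \rrbracket_V$ but is specific to this model, and in compensation is shorter and dodges the resizing step entirely.
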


\begin{proof}
Let $p : Prop_U$, and $C: V \to Prop_U$ to be $C(x) :\equiv (x = 0 \land p) \lor x = 1$, where $0 = \emptyset, 1 = \{0\}$. Technically, this mere proposition lives in $U'$ because we have used equality, but it may be resized to $U$ by using the bisimulation relation. Using Infinity and $U$-Separation we form the set $ S_p :\equiv \{ x \in \omega \mid x=1 \lor (x=0 \land p) \} : V$. It follows that $1 \in S_p$, so $S_p$ is non-empty. Using Foundation, we obtain $z :V$ such that $z \in S_p$ and $z \cap S_p = \emptyset$, i.e. $\forall( y \in z).(y \not\in S_p)$. It follows by definition of $S_p$ that $ z = 1 \lor (z = 0 \land p)$. However, $z = 1 \implies 0 \not\in S_p \implies \neg p$, and $(z = 0 \land p) \implies p$. Hence $\neg p \lor p$.
\end{proof}

\begin{corollary}
$(V, \in) \models$ ZF implies LEM.
\end{corollary}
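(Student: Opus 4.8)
The plan is to observe that this corollary is an immediate consequence of the preceding proposition together with the definition of ZF. First I would recall that ZF is defined in the introduction as $\textbf{ZF}^-$ + Foundation, so that Foundation is among the axioms of ZF. Hence if $(V,\in) \models$ ZF, then in particular $(V,\in) \models$ Foundation, which by Definition 1.2 means that $\forall(t_1,\ldots,t_k:V).\llbracket \mathrm{Foundation}\rrbracket_V(t_1,\ldots,t_k)$ is inhabited; since Foundation is a sentence with no free variables, this is simply the statement that $\llbracket \mathrm{Foundation}\rrbracket_V$ is inhabited.

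Next I would invoke the previous proposition, which asserts precisely that an inhabitant of $\llbracket \mathrm{Foundation}\rrbracket_V$ yields $LEM_U$. Composing these two observations, an inhabitant of $(V,\in)\models$ ZF produces an inhabitant of $LEM_U$. Finally, I would note that $LEM_U$ coincides with the axiom LEM as stated in the introduction, since LEM is given there by $\prod_{P:U}(isProp(P) \implies (P+\neg P))$, i.e. the law of excluded middle for mere propositions in the universe $U$; there is therefore nothing further to prove.

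The only point requiring any care — and it is minor — is the bookkeeping around universe levels: the proposition delivers excluded middle for $Prop_U$, and one must confirm that this is the intended reading of LEM in the statement of the corollary. Since the paper's LEM is stated at level $U$, this matches exactly, and no genuine obstacle arises. Indeed, taken together with the earlier Theorem (HoTT + LEM) $(V,\in)\models$ ZF, this corollary shows that validity of ZF in $(V,\in)$ is tightly coupled to LEM, which is the conceptual payoff I would highlight in a closing remark.
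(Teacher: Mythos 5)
Your proposal is correct and matches the paper's implicit argument exactly: the corollary is left without proof precisely because it follows immediately from the preceding proposition, since ZF contains Foundation and Definition 1.2 reduces $(V,\in)\models$ Foundation to inhabitation of $\llbracket \mathrm{Foundation}\rrbracket_V$. Your care about the universe level of LEM is also apposite and raises no obstacle, as you note.
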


Finally, assuming AC we obtain a full model of ZFC

\begin{theorem}[HoTT + AC]
$(V,\in)\models$ ZFC.
\end{theorem}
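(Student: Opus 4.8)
The plan is to first observe that we are already in the ZF setting. The axiom of choice as stated implies $\mathrm{LEM}_U$ via the homotopical Diaconescu argument (Diaconescu's theorem; see \cite{hott}, Theorem 10.1.14), so the previous theorem immediately gives $(V,\in)\models\mathrm{ZF}$. It therefore remains only to inhabit $\llbracket\mathrm{Choice}\rrbracket_V$. I would work with the functional formulation of choice, namely that every set all of whose elements are inhabited admits a choice function; unfolding the translation, the task is to show that for every $v:V$ satisfying $\forall(x:V).(x\in v \implies x\neq\emptyset)$ there merely exists $F:V$ which is a function with domain $v$ such that $F(x)\in x$ for all $x\in v$. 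Crucially, this conclusion is a mere proposition, so I am free to eliminate any propositional truncations encountered along the way.

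Next I would pass to the monic presentations afforded by Lemma 10.5.6. Write $v=\mathrm{set}([v],m_v)$ with $m_v$ monic, and for each $t:[v]$ present the element $m_v(t)=\mathrm{set}([m_v(t)],m_{m_v(t)})$. The hypothesis that $m_v(t)$ is nonempty translates precisely to $[m_v(t)]$ being merely inhabited. Thus $t\mapsto[m_v(t)]$ is a family of sets indexed by the set $[v]$, each merely inhabited, and AC yields $\bigl\|\prod_{t:[v]}[m_v(t)]\bigr\|$. Since the overall goal is a mere proposition, I extract from this an honest section $c:\prod_{t:[v]}[m_v(t)]$.

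With $c$ in hand I would assemble the choice function internally. Using Pairing (which holds in $V$) I form the internal ordered pair $\langle a,b\rangle:\equiv\{\{a\},\{a,b\}\}$, and then define $F:\equiv\mathrm{set}\bigl([v],\,\lambda t.\langle m_v(t),\,m_{m_v(t)}(c(t))\rangle\bigr)$. It then remains to verify, using Extensionality together with the monicity of $m_v$, that $F$ is functional (each first coordinate $m_v(t)$ pins down the pair uniquely), that its domain is exactly $v$, and that the selected value $m_{m_v(t)}(c(t))$ genuinely lies in $m_v(t)$. All three facts are routine consequences of the presentations.

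The main obstacle here is bookkeeping rather than conceptual depth: faithfully rendering the predicate "$F$ is a function with domain $v$" as an internal $\in$-formula, and checking that the set of Kuratowski pairs built from $c$ satisfies it. The one genuinely delicate point is the interface between the \emph{mere} existence delivered by AC and the explicit construction of the witness $F$, which must be built from an actual section rather than a truncated one. This is resolved exactly because $\llbracket\mathrm{Choice}\rrbracket_V$ is itself a mere proposition, so the truncation may be discharged before $F$ is constructed.
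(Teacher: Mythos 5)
Your proposal is correct, but it takes a genuinely different route from the paper's. The paper works with the translated statement $\exists(c \in (\cup x)^x).\forall(y\in x).c(y)\in y$ directly: it applies type-theoretic AC to the \emph{constant} family $A(y):\equiv V$ indexed by $V$ itself, with the predicate $P(y,z):\equiv (y\in x \implies z\in y)$, obtaining a global map $g:V\to V$ with $g(y)\in y$ for all $y\in x$; it then internalises the graph of $g$ by Pairing and H-Replacement applied to $g':\equiv \lambda y.\langle y,g(y)\rangle$. You instead apply AC only to the small family $t\mapsto [m_v(t)]$ over $[v]:Set_U$ arising from the monic presentations, and assemble the witness directly with the $\mathrm{set}$ constructor. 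Your route has two real advantages: it invokes AC only at universe $U$ (the paper's AC is stated with $\prod_{X:U}$, so its application to the index type $V:U'$ strictly requires AC at the next universe, a mismatch your version avoids), and it makes explicit the Diaconescu step AC $\Rightarrow$ LEM ($\Rightarrow$ PR), on which the ZF part of the theorem silently depends — the paper's proof addresses only Choice. What the paper's approach buys is brevity and reuse of the already-established internal machinery (Pairing, H-Replacement) in place of your external bookkeeping with Kuratowski pairs and the injectivity of $m_v$ (which does follow from monicity, by evaluating against constant maps). Two small caveats on your side, both easily repaired: your hypothesis $x\neq\emptyset$ only yields $\neg\neg\|[m_v(t)]\|$ constructively, so you should either phrase the hypothesis as inhabitedness $\exists(z:V).z\in x$ (which unfolds precisely to $\|[m_v(t)]\|$) or note that LEM, already in hand from Diaconescu, collapses the two; and you need the assignment $u\mapsto([u],m_u)$ to be the canonical one provided by Lemma 2.2, so that $t\mapsto[m_v(t)]$ is a genuine type family over $[v]$.
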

\begin{proof}
\begin{center}
$\llbracket$Axiom of Choice$\rrbracket_V \equiv \forall(x:V).((\forall(y \in x).\exists(z:V).z \in y) \implies$ \\ $ \exists(c \in (\cup x)^x).\forall(y \in x).c(y) \in y)$.
\end{center}
So, let $x:V$ and take $A: V \to U'$ to be the constant family $y \mapsto V$, and $P: \prod_{x:V} A(x) \to U'$ to be the map $(y,z) \mapsto (y \in x \implies z \in y)$. Observe that $V$ is a set, $A(y)$ is a set for all $y:V$, and $P(y,z)$ is a mere proposition for all $y,z:V$. AC therefore gives us that $(\forall(y:V).\exists(z:V).y \in x \implies z \in y) \implies (\exists(g:\prod_{y:V}A(y)).\forall(y:V).y \in x \implies g(y)\in y)$. Hence, supposing that $(\forall(y \in x).\exists(z:V).z \in y)$, we obtain a $g:\prod_{y:V}A(y)\equiv V \to V$ as above. Thus, we define $g':V\to V$ by $y \mapsto \langle y,g(y)\rangle$ using Pairing. Using Replacement with $g'$ and $x$, we obtain $c:V$ containing exactly the ordered pairs $\langle y,g(y)\rangle$ for all $y \in x$. We can then see that $function(c, x, \cup x)$, as by the assumption $y \in x \implies g(y) \in y \subseteq \cup x$. Hence $c \in (\cup x)^x$ and $\forall y \in x, c(y)\equiv g(y) \in y$.
\end{proof}

Still, by assuming LEM or AC in our type theory we lose the computationally valuable constructive character of HoTT. Hence, we turn to constructive set theories, which are formulated in IPL. 

\section{Constructive Set Theories}
We let $\Sigma := $Extensionality + Empty Set + Pairing + Union. 
\begin{definition}
\textbf{ECST} $:= \Sigma$ + Strong Infinity + $\Delta_0$-Separation + Replacement, where
\begin{center}
Strong Infinity $:= \exists a(Ind(a) \land \forall b (Ind(b) \implies a \subseteq b)),$ \\ $Ind(x):=(\emptyset \in x \land \forall y \in x \ y \cup \{y\} \in x )$
\end{center}

\end{definition}

This is a good candidate since it omits the problematic axioms of ZF, namely Separation, Foundation, and Powerset. In fact, we shall prove that $(V,\in)$ is a model of ECST in HoTT. We begin by rewriting the proof that $V$ models $\Delta_0$-Separation found in \cite{hott} as Corollary 10.5.9.

\begin{proposition}
$(V,\in) \models \Delta_0$-Separation.
\end{proposition}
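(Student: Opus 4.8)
The plan is to reduce $\Delta_0$-Separation to the $U$-Separation established above. Recall that $U$-Separation already produces, for any $a:V$ and any $U$-small class $C:V\to Prop_U$, a set $w$ with $x\in w \iff x\in a \land C(x)$. Since the translation of a $\Delta_0$ formula a priori only yields a class $\llbracket\phi\rrbracket_V : V\to Prop_{U'}$, it suffices to show that for every $\Delta_0$ formula $\phi(x)$, with all parameters instantiated in $V$, the class $\llbracket\phi\rrbracket_V$ is pointwise equivalent, as a mere proposition, to a $U$-small class $C:V\to Prop_U$; applying $U$-Separation to this $C$ then yields the required $w$.

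The core of the argument is an induction on the structure of $\Delta_0$ formulas, proving that $\llbracket\phi\rrbracket_V$ admits such a $U$-small resizing. For the atomic cases I would use the two $U$-valued relations introduced above: by the lemma identifying $=_V$ with $\sim$, equality $\llbracket x=y\rrbracket_V$ is equivalent to $\sim : V\times V \to Prop_U$, and membership $\llbracket x\in y\rrbracket_V$ is equivalent to $\tilde\in : V\times V\to Prop_U$, both of which are already $U$-small. The propositional connectives are handled immediately, since $Prop_U$ is closed under $\neg,\land,\lor$ and $\to$, so the inductive hypothesis transfers to $\neg\phi$, $\phi\land\psi$, $\phi\lor\psi$ and $\phi\to\psi$.

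The essential step, and the one that forces the restriction to $\Delta_0$ formulas, is the treatment of the bounded quantifiers. Given a value $v:V$ bounding the quantifier, I would invoke the monic presentation $v=\mathrm{set}([v],m_v)$ with $[v]:Set_U$ to replace quantification over the large type $V:U'$ by quantification over the small index set $[v]:U$. Concretely, using $x\in v \iff \exists(a:[v]).\,x = m_v(a)$ together with path induction, one obtains the equivalences $\llbracket\exists x\in v.\,\phi\rrbracket_V \iff \exists(a:[v]).\llbracket\phi\rrbracket_V(m_v(a))$ and $\llbracket\forall x\in v.\,\phi\rrbracket_V \iff \forall(a:[v]).\llbracket\phi\rrbracket_V(m_v(a))$. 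Since $[v]:U$ and, by the inductive hypothesis, $\llbracket\phi\rrbracket_V(m_v(a))$ is equivalent to a term of $Prop_U$, the truncated $\Sigma$ and the $\Pi$ over $[v]$ again land in $Prop_U$, completing the induction and hence the proof via $U$-Separation.

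The main obstacle is precisely this quantifier case: one must check that collapsing bounded quantification onto $[v]$ genuinely preserves $U$-smallness, which depends on both the monic presentation lemma and the resizing of equality through $\sim$. Unbounded quantifiers would range over all of $V:U'$ and break this reduction, which is exactly why the unrestricted Separation scheme is unavailable here and why the induction must be confined to $\Delta_0$ formulas.
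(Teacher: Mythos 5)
Your proposal is correct and follows essentially the same route as the paper: resize the atomic cases via the bisimulation $\sim$ and $\tilde\in$, use closure of $Prop_U$ under the propositional connectives, and collapse the bounded quantifiers onto the small monic presentation $[v]:Set_U$ before applying $U$-Separation. The only cosmetic difference is that the paper packages the resizing as a second interpretation $\llbracket-\rrbracket_V^1$ and identifies it with $\llbracket-\rrbracket_V$ via univalence and function extensionality, whereas you work with pointwise equivalences directly, which suffices equally well for invoking $U$-Separation.
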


\begin{proof}
We call a class $\Delta_0$ if it is the image of a $\Delta_0$ formula of set theory. We shall show that $\Delta_0$-classes are $U$-small, and hence separable by Proposition 2.1.9. 

Let $\phi(x)$ be a $\Delta_0$ formula of set theory, and consider the class $C :\equiv \llbracket\phi(x)\rrbracket_V : V \to Prop_{U'}$. We define a new interpretation of formulas $\llbracket -\rrbracket_V^1$ by changing the definition of $\llbracket x=y\rrbracket_V^1$ to $\lambda xy.x\sim y$, and interpreting $\phi(x)$ in $(V, \tilde\in)$. By induction we have that $\forall(x:V).(\llbracket\psi(x)\rrbracket_V(x) \iff \llbracket\psi(x)\rrbracket_V^1(x))$ for all $\psi(x) \in Form(\in)$, and so $\forall(x:V).(C(x)\iff \tilde C(x))$. Hence $\forall(x:V)(C(x)=\tilde C(x))$ by univalence, and so $C = \tilde C$ by function extensionality. Hence, it suffices to show that $\tilde C$ is $U$-small. This shall be done in the metatheory by induction on the structure of $\phi(x)$. 

The base cases $x\sim y:\equiv \llbracket x=y\rrbracket_V^1$ and $x \ \tilde\in \ y:\equiv \llbracket x \in y\rrbracket_V^1$  are already $U$-small. Furthermore, $U$ is closed under the mere-propositional operations, so it remains to verify this for the bounded quantifiers. Indeed, we have that 

\begin{center}
$\llbracket\exists(x \in a)\chi(x)\rrbracket_V^1:\equiv \mid \mid \sum_{x:V}(x \  \tilde\in \ a \land \llbracket\chi\rrbracket_V^1(x)) \mid \mid$.
\end{center}

We know that $(x\  \tilde\in \ a \land \llbracket\chi\rrbracket_V^1(x))$ is $U$-small assuming by induction that $\llbracket\chi\rrbracket_V^1(x)$ is. Using the type of members of $a$ as per Lemma 2.2, we obtain that 

\begin{center}
$\sum_{x:V}(x \ \tilde\in \ a \land \llbracket\chi\rrbracket_V^1(x)) =_{U'} \sum_{x:[a]}\llbracket\chi\rrbracket_V^1(x)$, so 
\end{center}

\begin{center}
$\llbracket\exists(x \in a)\chi(x)\rrbracket_V^1 =_{U'} \mid \mid \sum_{x:[a]}P(x) \mid \mid$,
\end{center}

which is in $U$, as $U$ is closed under propositional truncation. For the universal case, we analogously obtain that $\llbracket\forall(y \in b)\chi(y)\rrbracket_V^1 = \prod_{x:[a]}\llbracket\chi\rrbracket_V^1$ which is also in $U$. 

\end{proof}

By tweaking the proof that Infinity holds in $V$, we also obtain Strong Infinity. Notice that Strong Infinity does not only stipulate the existence of an inductive set, but of a smallest such one. The witness of this axiom should clearly play the role of $\omega$. Under Foundation, this is of course equivalent to regular Infinity but this is not always true. In our case, we may obtain Strong Infinity via the induction principle of $\mathbb{N}$.

\begin{proposition}
$(V, \in) \models$ Strong Infinity.
\end{proposition}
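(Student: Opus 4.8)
The plan is to take $\omega$, the image of $\mathbb{N}$ under the von Neumann numeral function, as the witness $a$, so the only genuinely new work beyond the proof of ordinary Infinity is the minimality clause. Concretely, I would first turn the successor into an honest function $succ : V \to V$, defined by the recursion principle of $V$ via $succ(set(A,h)) :\equiv set(A + \textbf{1}, [h, \lambda *.\, set(A,h)])$; one checks this respects $setext$ (equal images of $(A,h)$ and $(B,k)$ yield equal images of the augmented families), and that $succ(y)$ provably equals $y \cup \{y\}$ by Extensionality together with the membership characterisation. Then define $f : \mathbb{N} \to V$ by $\mathbb{N}$-recursion with $f(0) :\equiv \emptyset$ and $f(n+1) :\equiv succ(f(n))$, and set $\omega :\equiv set(\mathbb{N}, f)$. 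Unfolding the recovered $\in$ relation gives $y \in \omega \iff \exists(n:\mathbb{N}).\, y = f(n)$, and this is exactly the object used to verify Infinity.

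Next I would verify $\llbracket Ind(\omega)\rrbracket_V$. Since $f(0) \equiv \emptyset$, the index $n = 0$ witnesses $\emptyset \in \omega$. For closure under successor, suppose $y \in \omega$; because the goal $y \cup \{y\} \in \omega$ is a mere proposition, the recursion principle of propositional truncation lets me extract an actual $n:\mathbb{N}$ with $y = f(n)$, and then $y \cup \{y\} = succ(y) = succ(f(n)) = f(n+1) \in \omega$ via the index $n+1$.

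The new ingredient is minimality. Given $b:V$ with $\llbracket Ind(b)\rrbracket_V$, I must show $\omega \subseteq b$, i.e. $\forall(y:V).(y \in \omega \implies y \in b)$. Fixing $y$ and assuming $y \in \omega$, the conclusion $y \in b$ is again a mere proposition, so I may untruncate and assume a genuine $n$ with $y = f(n)$, reducing the task to proving $\prod_{n:\mathbb{N}} (f(n) \in b)$. This I establish by the induction principle of $\mathbb{N}$: the base case $f(0) = \emptyset \in b$ holds as $b$ is inductive, and in the step case $f(n) \in b$ together with inductivity of $b$ yields $f(n+1) = f(n) \cup \{f(n)\} \in b$. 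Assembling the three parts, $\omega$ satisfies $Ind(\omega) \land \forall b\,(Ind(b) \implies \omega \subseteq b)$, and applying the existential witness gives $\llbracket$Strong Infinity$\rrbracket_V$.

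The main obstacle I anticipate is not the induction itself but reconciling the concrete type-theoretic definitions ($\emptyset$, $succ$, $f(n+1)$) with the way the abbreviations $\emptyset$ and $y \cup \{y\}$ inside $Ind$ are unfolded by $\llbracket-\rrbracket_V$ into the pure relational language $\lo L = \{\in\}$: one must confirm that $succ(f(n))$, presented through the coproduct $A + \textbf{1}$, genuinely has as its elements precisely $f(n)$ together with the members of $f(n)$, so that it satisfies the formula defining $y \cup \{y\}$. This is immediate from the coproduct presentation and Extensionality, and once it is in place the remaining steps are the routine but careful bookkeeping of propositional truncations described above.
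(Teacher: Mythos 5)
Your proposal is correct and follows essentially the same route as the paper: witness Strong Infinity by $\omega :\equiv set(\mathbb{N}, I)$ with $I$ defined by recursion ($I(0)\equiv\emptyset$, $I(n+1)\equiv I(n)\cup\{I(n)\}$), reduce the minimality clause to $\prod_{n:\mathbb{N}}(I(n)\in b)$ after untruncating the mere existence in $y\in\omega$, and discharge that by the induction principle of $\mathbb{N}$. The only divergence is cosmetic: you build $succ:V\to V$ explicitly by $V$-recursion on the coproduct presentation, whereas the paper simply uses the operations $y\cup\{y\}$ already available from Pairing and Union, so your extra verification that the coproduct construction respects $setext$ is sound but unnecessary.
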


\begin{proof}
The proof of (regular) Infinity is witnessed by $\omega = set(\mathbb{N},I)$, where $I: \mathbb{N} \to V$ is given by the recursion $I(0):\equiv \emptyset$, and $I(succ(n)):\equiv I(n) \cup \{I(n)\}$. Fix $b:V$ such that $Ind(b):\equiv \emptyset \in b \land \forall(x \in b).(x \cup \{x\} \in b)$ holds. Since $x\in \omega \iff \exists(n:\mathbb{N}).(x = I(n))$, it suffices to show that $\forall(n:\mathbb{N}).(I(n) \in \omega \implies I(n) \in b)$. Using the induction principle of $\mathbb{N}$, we have $I(0):\equiv \emptyset \in b$, and assuming $I(n) \in b$, we have that $I(succ(n)):\equiv I(n) \cup \{I(n)\}$, and so $I(succ(n)) \in b$ by the transitivity of $b$. It follows that $\forall(x \in \omega).(x \in \beta)$, and since $b$ was arbitrary, $\omega$ is the least transitive set.
\end{proof}

The above two propositions imply the following theorem which appears to be new in the literature. 

\begin{theorem}
HoTT proves that $(V,\in)$ is a model of ECST. 
\end{theorem}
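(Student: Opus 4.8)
The plan is to assemble the theorem directly from the components already established, since ECST was defined as $\Sigma$ + Strong Infinity + $\Delta_0$-Separation + Replacement, and each conjunct has either just been proven or follows from an earlier result. First I would recall that $\Sigma :\equiv$ Extensionality + Empty Set + Pairing + Union is validated by the earlier Proposition establishing that $(V,\in)\models$ Extensionality + Empty Set + Pairing + Union + Infinity + Exponentiation + $\in$-induction (the citation to Theorem 10.5.8 of the HoTT book), so all four of these axioms hold in $V$ without additional hypotheses. Crucially, none of these invoked PR, LEM, or AC, so they are available in plain HoTT.

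Next I would invoke the two immediately preceding propositions: Strong Infinity holds in $(V,\in)$, and $\Delta_0$-Separation holds in $(V,\in)$. These supply two of the four ECST components outright, again in plain HoTT with no choice or excluded-middle assumptions.

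The remaining conjunct is full Replacement. Here I would not appeal to $U$-Separation or PR, but instead chain together the two results that handle Replacement constructively: the earlier Proposition that $(V,\in)$ satisfies H-Replacement (from Theorem 10.5.8), together with the subsequent Proposition that H-Replacement implies $\llbracket\chi\rrbracket_V$ for all instances of the Replacement scheme. The key point to emphasize is that this latter implication rested only on the fact that mere propositions are closed under unique existential quantification, which is a theorem of plain HoTT requiring neither PR nor LEM. Thus every instance of Replacement is validated in $V$ within HoTT alone.

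The main subtlety, and the point I would flag explicitly, is the verification that the whole argument stays inside plain HoTT rather than HoTT + PR: whereas the full Separation scheme needed $U'$-separability and hence PR, ECST asks only for $\Delta_0$-Separation, whose classes were shown to be genuinely $U$-small and therefore separable by $U$-Separation without resizing. This is precisely why ECST, unlike ZF$^-$, is a theorem of bare HoTT. So I expect no genuine obstacle in the proof itself — it is a matter of correctly citing the four ingredients and confirming that each lives in plain HoTT — and the proof can be written as a single sentence collecting the relevant propositions.
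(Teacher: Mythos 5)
Your proposal is correct and matches the paper's own argument, which simply observes that the preceding two propositions ($\Delta_0$-Separation and Strong Infinity), combined with the earlier results giving $\Sigma$ and Replacement via H-Replacement, yield the theorem. Your explicit remark that the whole assembly avoids PR --- because ECST demands only $\Delta_0$-Separation, whose classes were shown to be $U$-small --- is exactly the point implicit in the paper's placement of this theorem in plain HoTT rather than HoTT + PR.
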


Still, ECST does not fully capture the set-theory of $(V, \in)$, as it lacks Exponentiation and $\in$-induction. We may consider the following which is independent from ECST.

\begin{proposition}[ITER$_\omega$]
The Full Iteration Scheme, which says that for a class $C$, a class function $F(x,y)$, and $c \in C$, there is a set $\tilde C$ and a unique function $H : \omega \to \tilde C$ such that $H(\emptyset)=c_0$, and $F(H(x), H(x^+))$ for all $x \in \omega$, holds in V. 
\end{proposition}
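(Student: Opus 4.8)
The plan is to lift the iteration to the level of the natural-number type $\mathbb{N}$ and then realise the resulting sequence inside $V$ via the $set$ constructor, exactly as $\omega = set(\mathbb{N}, I)$ was built in the proof of Strong Infinity. First I would replace the class function $F$ by an honest function $r : V \to V$. Since $F(x,y)$ is a class function we have $\forall(x:V).\exists!(y:V).F(x,y)$, so the argument used to deduce the Replacement scheme from H-Replacement applies verbatim: the type $\Theta(x) :\equiv \sum_{y:V}(F(x,y) \times \prod_{z:V}(F(x,z)\to z=y))$ is a mere proposition, hence from $\prod_{x:V}\|\Theta(x)\|$ we recover $\prod_{x:V}\Theta(x)$ and set $r :\equiv \lambda x. pr_1(f(x))$, giving $F(x, r(x))$ for every $x:V$. (If $F$ is only functional on $C$, one carries the membership proof along by recursing into $\sum_{x:V}C(x)$ instead.) I would then define $h : \mathbb{N} \to V$ by the recursion principle of $\mathbb{N}$, setting $h(0) :\equiv c$ and $h(succ(n)) :\equiv r(h(n))$, which is the type-level iteration of $r$ from $c$.

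Next I would produce the required sets directly from the $set$ constructor, avoiding any appeal to Replacement. Put $\tilde C :\equiv set(\mathbb{N}, h)$ and
\[
H :\equiv set\big(\mathbb{N},\ \lambda n.\langle I(n), h(n)\rangle\big),
\]
where $\langle -,-\rangle$ is the Kuratowski pairing obtained from Pairing; then $\tilde C : V$ collects exactly the values $h(n)$ and $H$ collects exactly the pairs $\langle I(n), h(n)\rangle$. The initial condition is $H(\emptyset) = h(0) = c$, using $I(0) :\equiv \emptyset$; and for $x = I(n) \in \omega$ the successor computed in $V$ is $x^+ = x \cup \{x\} = I(n)\cup\{I(n)\} \equiv I(succ(n))$, so $H(x^+) = h(succ(n)) = r(h(n))$ and $F(H(x), H(x^+)) \equiv F(h(n), r(h(n)))$ holds by the defining property of $r$. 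For uniqueness, given any $H'$ with the same two properties I would show $H'(I(n)) = h(n)$ by induction on $n$ — the base case is the initial condition and the step uses functionality of $F$ to force $H'(I(n)^+) = r(h(n))$ — and conclude $H = H'$ by Extensionality.

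The main obstacle is verifying that $H$ really is a function, i.e. $function(H, \omega, \tilde C)$, and specifically its single-valuedness: two pairs $\langle I(n), h(n)\rangle$, $\langle I(m), h(m)\rangle$ of $H$ share a first coordinate only when $I(n) = I(m)$, and we must then force $h(n) = h(m)$. This reduces to showing that $I : \mathbb{N} \to V$ is injective, which is the genuine content of the proof. I would establish it by proving $I(n)\in I(m)$ whenever $n < m$ (by induction, since $I(k)\in I(succ(k))$ and $I(k)\subseteq I(succ(k))$) together with the irreflexivity of $\in$ supplied by the $\in$-induction principle that $(V,\in)$ validates; from $I(n) = I(m)$ with $n \neq m$ one then derives $I(n)\in I(n)$, a contradiction, whence $n = m$ and $h(n) = h(m)$. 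Totality and the pair-structure conditions of $function(H,\omega,\tilde C)$ are then routine, and, reading $c_0$ as $c$, this completes the construction of the unique iterating function.
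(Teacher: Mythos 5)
Your proposal follows essentially the same route as the paper's proof: internalise the class function $F$ to $r : V \to V$ via the unique-existence argument behind H-Replacement, define the iterating sequence by recursion on $\mathbb{N}$, and take $\tilde C :\equiv set(\mathbb{N}, f)$ and $H :\equiv set(\mathbb{N}, \lambda n.\langle I(n), f(n)\rangle)$. The only difference is that you explicitly verify single-valuedness of $H$ via injectivity of $I$ (using irreflexivity of $\in$, which follows constructively from $\in$-induction) and spell out the uniqueness induction --- verification steps the paper's terse proof leaves implicit, and your treatment of them is correct.
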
 

\begin{proof}
Take $C: V \to Prop$, $F: V \times V \to Prop$ such that $\forall(x:V).\exists !(y:V).F(x,y)$ and $c : V$ such that $C(c)$ holds. By proposition 2.2, we may internalise the class function $F$ to a function $r: V \to V$ with $F(x,r(x))$ true. Define $f : \mathbb{N} \to V$ by $h(0) \equiv c, f(succ(n)) \equiv r(f(n))$, and take $\tilde C :\equiv set(\mathbb{N}, f)$. Likewise, define $h: \mathbb{N} \to V$ by $n \mapsto \langle I(n), f(n)\rangle$, where $I: \mathbb{N} \to V$ is the map such that $\omega = set(\mathbb{N}, I)$. Taking $H :\equiv set(\mathbb{N}, h)$, we see that $H$ is the unique set-theoretic function such that $F(H(x), H(x^+))$ for all $x \in \omega$.
\end{proof}

Using this with the class function $x \cup \bigcup x$ we obtain the following.

\begin{corollary}
$(V, \in) \models \forall x (x$ has a transitive closure).
\end{corollary}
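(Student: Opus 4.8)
The plan is to feed the Full Iteration Scheme (Proposition ITER$_\omega$) the class function $a \mapsto a \cup \bigcup a$. Concretely I would take the class $C$ to be all of $V$, the seed $c :\equiv x$, and the functional relation $F(a,b) :\equiv (b = a \cup \bigcup a)$, which is total and single-valued because $a \cup \bigcup a$ is a set uniquely determined by $a$ using Union, Pairing and Extensionality. As constructed in the proof of ITER$_\omega$, this supplies a set $\tilde C$ whose members are exactly the iterates of $a \mapsto a \cup \bigcup a$ starting from $x$; writing $f:\mathbb{N}\to V$ for this iteration, so that $f(0) = x$ and $f(succ(n)) = f(n) \cup \bigcup f(n)$, we have $\tilde C = set(\mathbb{N},f)$ and hence $z \in \tilde C \iff \exists(n:\mathbb{N}).\,z = f(n)$. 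I would then \emph{define} the transitive closure to be $TC(x) :\equiv \bigcup \tilde C$, which is a set by Union, and unwind its membership to $z \in TC(x) \iff \exists(n:\mathbb{N}).\,z \in f(n)$.

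It remains to check the three defining clauses. For $x \subseteq TC(x)$: since $x = f(0)$ is a member of $\tilde C$, every element of $x$ lies in $\bigcup\tilde C$. For transitivity, the point is that the chosen $F$ makes the iterates increasing, so that $\bigcup f(n) \subseteq f(succ(n))$ for each $n$; thus if $z \in TC(x)$ with $z \in f(n)$ and $w \in z$, then $w \in \bigcup f(n) \subseteq f(succ(n))$, and as $f(succ(n)) \in \tilde C$ we get $w \in \bigcup\tilde C = TC(x)$, giving $z \subseteq TC(x)$. For minimality, given any transitive $u$ with $x \subseteq u$, I would prove $f(n)\subseteq u$ for all $n$ by induction on $\mathbb{N}$: the base case is $f(0) = x \subseteq u$, and in the step $f(n)\subseteq u$ yields $\bigcup f(n) \subseteq \bigcup u \subseteq u$ by transitivity of $u$, whence $f(succ(n)) = f(n)\cup\bigcup f(n) \subseteq u$; therefore $TC(x) = \bigcup\tilde C \subseteq u$. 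Together these exhibit $TC(x)$ as the least transitive set containing $x$, which is the transitive closure.

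The heavy lifting has already been done inside ITER$_\omega$, so I expect no deep obstacle; the care required is essentially bookkeeping at the interface between the type $\mathbb{N}$ and the internalised $\omega = set(\mathbb{N},I)$. In particular one must consistently read $z \in \tilde C$ through the propositional truncation $\exists(n:\mathbb{N}).\,z = f(n)$, and make sure the inductions establishing transitivity and minimality are carried out on the type $\mathbb{N}$ and then transported through $\llbracket-\rrbracket_V$, rather than being assumed internal to the model. A minor preliminary point is verifying the totality-and-uniqueness hypothesis of ITER$_\omega$ for $F$, which is exactly where Proposition 2.2 internalises $a \mapsto a \cup \bigcup a$ as a genuine function $r:V\to V$; once that is in place, all the inclusions reduce to routine applications of the interpretations of $\subseteq$ and Union.
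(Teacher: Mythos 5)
Your proposal is correct and matches the paper's intended argument exactly: the paper derives this corollary by applying ITER$_\omega$ to the class function $a \mapsto a \cup \bigcup a$ with seed $x$, which is precisely your construction, taking $TC(x) :\equiv \bigcup \tilde C$ where $\tilde C = \mathrm{set}(\mathbb{N},f)$ is the set of iterates. Your verification of the three clauses (containment, transitivity via $\bigcup f(n) \subseteq f(succ(n))$, and minimality by induction on $\mathbb{N}$) fills in details the paper leaves implicit, with the truncation bookkeeping handled appropriately.
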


One may ask if for every set $x: V$ we may construct the set $H(x):V$ of sets hereditarily an image of a set in $x$, i.e. the smallest set such that whenever $b \in x$ and $f: b \to H(x)$, $Im f \in H(x)$. By clever use of ITER$_\omega$ we can see that $H(\omega)$ is a set in $V$. The general case is equivalent to the following axiom, which is independent from all the ones we have examined thus far. 

\begin{definition}[fREA]
The functional Regular Extension Axiom is the proposition:
\begin{center}
$\forall x \exists C ( x \subseteq C \land trans(C) \land \forall b \in C \ \forall f \in C^b Im f \in C)$
\end{center}
\end{definition}

This cannot be proven with an iteration argument as there might be "unbounded" functions $f: b \to x$ for $b \in x$. Nonetheless it does hold in ZF, but the proof is non-trivial and makes heavy use of Foundation, and hence of LEM. 

\begin{theorem}
ZF $\vdash$ fREA.
\end{theorem}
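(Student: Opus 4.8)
The plan is to prove fREA by exhibiting, for each set $x$, an explicit regular witness of the form $H_\kappa := \{\, y : |\mathrm{TC}(y)| < \kappa \,\}$, the collection of sets hereditarily of cardinality less than $\kappa$, for a suitably large regular cardinal $\kappa$. This is where Foundation is used essentially: it is only because every set has an ordinal rank, and $H_\kappa \subseteq V_\kappa$, that $H_\kappa$ is guaranteed to be a set rather than a proper class. So the first step is to fix, for the given $x$, a regular cardinal $\kappa$ with $|\mathrm{TC}(x)| < \kappa$, so that $x \in H_\kappa$; then I would record the two standard facts that $H_\kappa$ is transitive and that, by transitivity, $x \subseteq H_\kappa$. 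The witness is $C := H_\kappa$, and it then remains only to verify the closure clause.

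The core of the argument is to check $\forall b \in C\, \forall f \in C^b\, Im f \in C$. Given $b \in H_\kappa$ and a function $f : b \to H_\kappa$, note first that $|b| < \kappa$, because $b \in H_\kappa$ forces $|b| \le |\mathrm{TC}(b)| < \kappa$; hence $|Im f| \le |b| < \kappa$ and $Im f \subseteq H_\kappa$. The remaining point is to bound the transitive closure: writing $\mathrm{TC}(Im f) = Im f \cup \bigcup_{y \in Im f} \mathrm{TC}(y)$, this is a union of fewer than $\kappa$ sets, each of cardinality less than $\kappa$ (each $y \in Im f$ lying in $H_\kappa$). By the regularity of $\kappa$ such a union again has cardinality less than $\kappa$, so $Im f \in H_\kappa = C$, as required. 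Collecting $trans(C)$, $x \subseteq C$, and this closure clause yields the desired instance of fREA.

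I expect the main obstacle to be precisely this cardinality estimate for $\mathrm{TC}(Im f)$, together with the preliminary choice of $\kappa$: both hinge on having a regular cardinal available above the rank of $x$, and the step ``a $(<\kappa)$-indexed union of sets of size $<\kappa$ has size $<\kappa$'' is exactly where regularity does the work. This also clarifies the remark preceding the statement about why a naive iteration fails. If one instead closes $\mathrm{TC}(x)$ under the image operation in stages $C_0 \subseteq C_1 \subseteq \cdots$, a function $f : b \to C$ may take its values cofinally through the stages, so its image belongs to no single stage and the construction never closes off at $\omega$. Passing to a closure point of regular uncountable cofinality --- equivalently, to $H_\kappa$ with $\kappa$ regular --- is what repairs this, and it is here, alongside the use of the cumulative hierarchy, that the appeal to classical ZF is genuinely required.
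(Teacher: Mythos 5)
You prove the right statement for ZFC, but not the theorem as stated: the claim is ZF $\vdash$ fREA, and your argument invokes the Axiom of Choice essentially, in two places. First, the opening step ``fix a regular cardinal $\kappa$ with $|\mathrm{TC}(x)|<\kappa$'' is unavailable in ZF: if $\mathrm{TC}(x)$ is not well-orderable there is no ordinal $\kappa$ into which it injects, so no $H_\kappa$ contains $x$ at all; and even for well-orderable $x$, ZF does not prove the existence of any uncountable regular cardinal (by Gitik's theorem it is consistent with ZF, modulo large cardinals, that every uncountable aleph is singular). Second, the estimate ``a $(<\kappa)$-indexed union of sets of size $<\kappa$ has size $<\kappa$'' is itself a choice principle: it requires simultaneously selecting enumerations of the sets $\mathrm{TC}(y)$ for $y \in Im f$, and it fails in ZF even for $\kappa=\omega_1$, since in the Feferman--L\'evy model $\omega_1$ is a countable union of countable sets. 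In that model your witness genuinely breaks: taking $b=\omega$ and $f(n)=\alpha_n$ for a cofinal sequence $(\alpha_n)$ of countable ordinals, each $f(n)$ lies in $H_{\omega_1}$, but $\mathrm{TC}(Im f)$ contains every ordinal below $\omega_1$, so $Im f \notin H_{\omega_1}$; thus $H_{\omega_1}$ is not even closed under images of functions defined on $\omega$. So your argument establishes ZFC $\vdash$ fREA only.

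The proof the paper points to (Rathjen--Lubarsky, Proposition 4.1) avoids cardinality bounds altogether: roughly, the witness is not ``hereditarily of small transitive closure'' but ``hereditarily an image'' --- one takes $C$ to consist of those $y$ such that every member of $\mathrm{TC}(\{y\})$ is a surjective image of (a subset of) $\mathrm{TC}(x)$. For this $C$, transitivity is immediate and the closure clause is a one-line composition of surjections, with no union estimate needed; note that in the Feferman--L\'evy example above $Im f$ \emph{does} belong to this $C$, since every element of its transitive closure is a countable ordinal, even though $\mathrm{TC}(Im f)$ is uncountable. The genuinely hard part, and the place where Foundation is used heavily (as the remark preceding the statement indicates), is showing that this class is a \emph{set}, via a nontrivial rank-bounding argument in the spirit of Jech's ZF theorem that the hereditarily countable sets form a set of rank less than $\omega_2$ --- this replaces your appeal to $H_\kappa \subseteq V_\kappa$. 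The distinction is material downstream: the paper derives fREA in $V$ from HoTT+LEM, which models ZF but not ZFC, so with only your ZFC proof the subsequent corollary would require HoTT+AC instead.
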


\begin{proof}
Proposition 4.1 in \cite{frea}.
\end{proof}

\begin{corollary}[HoTT+LEM]
$(V, \in)\models$ fREA, and hence $H(x):V$ is a set for all $x:V$. 
\end{corollary}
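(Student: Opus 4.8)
The plan is to obtain $(V,\in)\models$ fREA as a purely logical consequence of the two results just stated, namely that $(V,\in)\models$ ZF in HoTT+LEM and that ZF $\vdash$ fREA, transferred through the soundness machinery of Section 2. The subtlety is that our soundness theorem only covers IPL, whereas the derivation ZF $\vdash$ fREA is classical (it uses Foundation, hence LEM); the whole point is to exploit the ambient LEM to close this gap.

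First I would verify that $(V,\in)$ validates every instance of the law of excluded middle. For any formula $\chi$ we have $\llbracket\chi\rrbracket_V : V^k \to Prop$, so each value $\llbracket\chi\rrbracket_V(\bar t)$ is a mere proposition; applying LEM (at the relevant universe, resizing with PR where needed) produces an inhabitant of $\llbracket\chi\rrbracket_V(\bar t) \lor \neg\llbracket\chi\rrbracket_V(\bar t)$, which is definitionally $\llbracket\chi \lor \neg\chi\rrbracket_V(\bar t)$. Since classical predicate logic is just IPL together with these instances taken as axioms, the proof of the soundness theorem extends unchanged: any HoTT $\in$-structure that models a set of sentences $S$ together with all instances of excluded middle also models every classical consequence of $S$.

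Next I would specialise this to $S = $ ZF and the classical consequence fREA. By the HoTT+LEM theorem $(V,\in)\models$ ZF, and by the previous step $(V,\in)$ models the excluded middle scheme, so the upgraded soundness theorem together with ZF $\vdash$ fREA yields $(V,\in)\models$ fREA. The final clause, that $H(x)$ is a set for every $x:V$, then follows from the equivalence already recorded between this statement and fREA: given $x:V$, unpacking $\llbracket$fREA$\rrbracket_V$ supplies a transitive $C$ with $x\subseteq C$ that is closed under images of functions in $C^b$ for $b\in C$, and Separation carves out of $C$ the subset of elements hereditarily an image of a set in $x$, which is exactly $H(x)$; minimality is read off from the closure properties of $C$.

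I expect the only real obstacle to be conceptual vigilance rather than computation: one must not apply the IPL soundness theorem directly to fREA, since its ZF-proof is irreducibly classical, and the crucial observation is that the type-theoretic LEM is precisely what makes the excluded-middle scheme hold internally in $V$, thereby licensing the classical transfer.
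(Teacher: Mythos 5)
Your proposal is correct and is essentially the argument the paper intends: the corollary is left as an immediate consequence of the theorems $(V,\in)\models$ ZF in HoTT+LEM and ZF $\vdash$ fREA, transferred through the soundness theorem of Section 2, and your elaboration --- that type-theoretic LEM (with resizing via the bisimulation relation where needed) makes every instance of the excluded-middle scheme hold in $(V,\in)$, so that soundness for IPL extends to classical predicate logic viewed as IPL plus that scheme --- is exactly the missing glue. The final extraction of $H(x)$ from the witness $C$ via Separation likewise matches the equivalence the paper records just before stating fREA.
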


It is still unclear if fREA can be proved without LEM, but it seems unlikely as it is independent from most constructive set theories. Also, ITER$_\omega$ may be obtained from ECST+$\in$-induction, and so it is more appropriate to consider set theories which contain $\in$-induction. 

\begin{definition}
\textbf{IZF} $:= \Sigma$ + Infinity + Powerset + Separation + $\in$-induction + Collection, where

\begin{center}
Collection $:=\forall a(\forall x \in a \ \exists y \ \theta(x,y) \implies \exists b \forall x \in a \ \exists y \in b \ \theta(x,y)), b \not\in FV(\theta)$.
\end{center}

\end{definition}

\begin{definition}
\textbf{CZF} $:= \textbf{ECST}$ - Replacement + $\in$-induction + Strong Collection + Subset Collection, where

\begin{center}
Strong Collection $:=$\\$ \forall a(\forall x \in a \ \exists y \phi(x,y) \implies \exists b(\forall x \in a \ \exists y \in b \phi(x,y) \land \forall y\in b \ \exists x\in a \ \phi(x,y)))$
\end{center}
\begin{center}
Subset Collection $:= \forall a \forall b(\exists c\ \forall u(\forall x \in a \ \exists y \in b \psi(x,y,u)  \implies $\\$\exists d \in c (\forall x \in a \ \exists y \in d \ \psi(x,y,u) \land \forall y \in d \ \exists x \in a \ \psi(x,y,u))))$,
\end{center}
where the formulas $\phi$ and $\psi$ may have any number of other free variables.
\end{definition}

In contrast to IZF, CZF is amenable to methods from ordinal-theoretic proof theory. Let us see how CZF relates to ECST.

\begin{proposition}
The following implications hold:
\begin{enumerate}
	\item Strong Collection $\vdash_{IPL}$ Collection $\land$ Replacement
	\item ECST $\vdash_{IPL}$ Subset Collection $\implies$ Exponentiation
\end{enumerate}
\end{proposition}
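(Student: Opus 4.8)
The plan is to treat the two parts separately, both being standard derivations in constructive set theory adapted to the present $\vdash_{IPL}$ setting; notably, part (1) needs no set-existence axioms beyond Strong Collection itself, while part (2) makes essential use of the $\Delta_0$-Separation and products available in ECST.

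For (1), I would first observe that Collection is literally a weakening of Strong Collection: the conclusion $\exists b(\forall x\in a\,\exists y\in b\,\phi(x,y)\land\forall y\in b\,\exists x\in a\,\phi(x,y))$ entails $\exists b\,\forall x\in a\,\exists y\in b\,\phi(x,y)$ by dropping the second conjunct, and the hypotheses coincide, so Collection follows by pure logic. For Replacement, fix a set $S$ and a formula $\phi$ with $\forall x\,\exists ! y\,\phi(x,y)$. In particular $\forall x\in S\,\exists y\,\phi(x,y)$, so applying Strong Collection with $a:\equiv S$ yields a set $b$ with $\forall x\in S\,\exists y\in b\,\phi(x,y)$ and $\forall y\in b\,\exists x\in S\,\phi(x,y)$. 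I claim $w:\equiv b$ already witnesses Replacement, with no appeal to Separation. The second conjunct gives $v\in b\implies\exists u\in S\,\phi(u,v)$ directly; conversely, if $\phi(u,v)$ with $u\in S$, the first conjunct provides $y\in b$ with $\phi(u,y)$, and uniqueness forces $y=v$, hence $v\in b$. The point worth flagging is that it is precisely the two-sidedness of Strong Collection together with functionality that makes $b$ exactly the image set.

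For (2), I would reduce Exponentiation to a Fullness-style statement obtained from Subset Collection. Working in ECST, I first form the product $a\times b$ (available since Pairing gives ordered pairs and Replacement together with Union build $a\times b$ as $\bigcup_{x\in a}\{x\}\times b$). I then apply Subset Collection to the pair of sets $a$ and $a\times b$, with the parameter $u$ ranging over relations and $\psi(x,p,u):\equiv p\in u\land\exists y\in b\,(p=\langle x,y\rangle)$, i.e. $p$ is a pair in $u$ with first coordinate $x$. This produces a set $c$ with the following property: for any total relation $R\subseteq a\times b$, setting $u:\equiv R$ validates the antecedent $\forall x\in a\,\exists p\in(a\times b)\,\psi(x,p,R)$ by totality of $R$, and the resulting $d\in c$ satisfies $d\subseteq R$ (every $p\in d$ lies in $R$) and is itself total (every $x\in a$ is covered).

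Finally, I would extract Exponentiation. Since ``$d$ is a total relation from $a$ to $b$'' is a $\Delta_0$ predicate, $\Delta_0$-Separation carves out $c':\equiv\{d\in c:\ d\text{ is a total relation}\}$, and the previous step shows every total relation has a total sub-relation in $c'$. Now if $f$ is a function $a\to b$ and $d\in c'$ is a total sub-relation of $f$, then single-valuedness of $f$ forces $d=f$ (for $\langle x,y\rangle\in f$, totality of $d$ gives some $\langle x,y'\rangle\in d\subseteq f$, whence $y'=y$ and $\langle x,y\rangle\in d$); hence every function $a\to b$ belongs to $c'$. A further application of $\Delta_0$-Separation yields $b^a:\equiv\{f\in c':\ function(f,a,b)\}$, which contains exactly the functions from $a$ to $b$, establishing Exponentiation. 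I expect the main obstacle to be the bookkeeping in (2): choosing the auxiliary formula $\psi$ and the second argument $a\times b$ so that Subset Collection outputs genuine sub-relations rather than mere subsets of $b$, and checking that the predicates ``total relation'' and $function(f,a,b)$ are $\Delta_0$ so that both separations are legitimate in ECST.
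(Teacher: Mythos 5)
Your proof is correct. Note that the paper does not actually spell out an argument for this proposition: it declares part (1) straightforward and cites Theorem 5.1.2 of the Aczel--Rathjen notes for part (2), so your write-up is in effect supplying the proof the paper outsources, and it matches the standard one. For (1), deriving Replacement from the \emph{two-sidedness} of Strong Collection plus uniqueness, with no appeal to Separation, is exactly the textbook argument, and it goes through verbatim under the weaker hypothesis $\forall x\in S\,\exists! y\,\phi(x,y)$ as well. For (2), you have inlined the usual intermediary of \emph{Fullness}: applying Subset Collection to $a$ and $a\times b$ with $\psi(x,p,u):\equiv p\in u\land\exists y\in b\,(p=\langle x,y\rangle)$ (the parameter $b$ in $\psi$ being licensed by the scheme's allowance of extra free variables) produces precisely a full set of total sub-relations, which is how the cited proof proceeds, except that it factors through Fullness as a named axiom. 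Your key verifications are sound: ECST builds $a\times b$ via Replacement and Union; the second conjunct of the Subset Collection conclusion gives $d\subseteq R$ and the first gives totality of $d$; single-valuedness of $f$ forces $d=f$ by Extensionality; and both ``total relation'' and $function(f,a,b)$ are $\Delta_0$ with parameters $a,b,a\times b$, so the separations are legitimate. The only inefficiency is the intermediate set $c'$, which is redundant: since every function $a\to b$ equals some $d\in c$, you can separate $b^a:\equiv\{f\in c : function(f,a,b)\}$ directly from $c$.
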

\begin{proof}
1 is straighforward. For 2, see Theorem 5.1.2 in \cite{rathjen}
\end{proof}
It therefore follows that CZF is strictly stronger than ECST. From a classical perspective, CZF has exactly the same strength as ZF. We write CZF$^c$ to emphasize that we mean CZF with classical logic.
\begin{proposition}
ZF and CZF$^c$ prove the same theorems.
\end{proposition}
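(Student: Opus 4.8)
The plan is to observe that both ZF and CZF$^c$ are theories formulated over classical first-order logic in the same language $\lo L=\{\in\}$, so it suffices to prove that each derives every axiom of the other: this gives equality of their deductive closures. I would split the argument into the two directions and dispatch the shared and routine axioms first, isolating the two genuinely substantive derivations.

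For ZF $\vdash$ CZF$^c$ the work is light. The axioms of $\Sigma$ (Extensionality, Empty Set, Pairing, Union) and Infinity are literally axioms of ZF. Strong Infinity follows by forming $\omega$ as $\{x\in I:\forall b(Ind(b)\to x\in b)\}$ for an inductive $I$, using Separation; this $\omega$ is inductive and contained in every inductive set. $\Delta_0$-Separation is a special case of full Separation, and $\in$-induction follows from Foundation, the two being classically equivalent. Strong Collection is obtained from Collection by separating the image to $b=\{y\in b_0:\exists x\in a\,\phi(x,y)\}$, and Subset Collection follows from Powerset and Separation by taking the witnessing set to be $P(b)$ and, for each parameter $u$, the subset $\{y\in b:\exists x\in a\,\psi(x,y,u)\}$. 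All of this is bookkeeping against the definitions of Section 4.

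For CZF$^c$ $\vdash$ ZF, Replacement is immediate since Strong Collection proves it by the preceding Proposition, and Foundation follows from $\in$-induction together with LEM. The two nontrivial points are full Separation and Powerset. For full Separation, given $a$ and an arbitrary formula $\phi(x)$ with parameters, LEM makes $\theta(x,p):\equiv(\phi(x)\land p=1)\lor(\neg\phi(x)\land p=0)$, with $0=\emptyset$ and $1=\{\emptyset\}$, a total single-valued relation on $a$; Replacement then yields the set $f$ coding the corresponding characteristic function, and one recovers $\{x\in a:\phi(x)\}$ as $\{x\in a:\langle x,1\rangle\in f\}$, which is an instance of $\Delta_0$-Separation since $\langle x,1\rangle\in f$ is bounded. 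For Powerset, Subset Collection gives Exponentiation by the preceding Proposition, so $2^a$ exists; using LEM every subset of $a$ has a characteristic function, whence $P(a)$ is exactly the image of $2^a$ under the map $f\mapsto\{t\in a:f(t)=1\}$, a set by Replacement (the inner sets being available by $\Delta_0$-Separation).

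The main obstacle is precisely the derivation of full Separation and Powerset inside CZF$^c$; the rest reduces to matching axioms against their definitions. The common device in both cases is to trade an unbounded condition for a bounded one relative to a set manufactured by Replacement from a classically defined characteristic function, thereby collapsing to $\Delta_0$-Separation and to Exponentiation. A fully detailed treatment of these equivalences may be found in \cite{rathjen}.
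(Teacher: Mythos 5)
Your proposal is correct, but it is doing genuinely more work than the paper, whose entire proof is a citation of Corollary 4.2.8 in \cite{rathjen}; what you have written out is, in substance, the standard argument underlying that cited result. Your decomposition into the two inclusions is the right one, and the two steps you correctly identify as substantive --- full Separation in CZF$^c$ via a classically defined characteristic function plus Replacement and $\Delta_0$-Separation (noting that $\langle x,1\rangle\in f$ is indeed a bounded formula once the Kuratowski pair is unwound), and Powerset via Exponentiation from Subset Collection together with LEM identifying $\lo P(a)$ with the Replacement-image of $2^a$ --- are exactly the classical equivalences the reference establishes. One point you gloss over in the easy direction deserves a sentence: when you derive Strong Collection in ZF ``from Collection,'' note that Collection is not among the axioms of ZF as the paper lists them (ZF$^-$ + Foundation, with the Replacement scheme); ZF does prove the Collection scheme, but this itself requires an argument (e.g.\ witnesses of minimal rank in the von Neumann hierarchy, using Foundation, Powerset, and Replacement), and it is arguably the least trivial step of that direction. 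With that caveat made explicit, your proof is complete and self-contained, which buys independence from the reference at the cost of length; the paper's citation buys brevity at the cost of opacity about where LEM is actually used, which your write-up makes admirably clear.
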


\begin{proof}
Corollary 4.2.8 in \cite{rathjen}.
\end{proof}

With these, one may conclude the following:

\begin{corollary}[HoTT + LEM]
$(V, \in) \models$ CZF + IZF.
\end{corollary}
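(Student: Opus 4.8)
The plan is to reduce the corollary to the earlier theorem that $(V,\in)\models$ ZF in HoTT + LEM, by first observing that LEM upgrades the homotopical semantics of Section 2 from IPL to classical predicate logic (CPL). For any $\in$-formula $\phi$ the translation $\llbracket\phi\rrbracket_V(\bar t)$ is, by construction, a term of $Prop_{U'}$, so LEM at level $U'$ (exactly as already invoked in the proof that $(V,\in)\models$ Foundation) supplies a decision $\llbracket\phi\rrbracket_V(\bar t)+\neg\llbracket\phi\rrbracket_V(\bar t)$. Re-running the IPL soundness argument with this extra ingredient, the translations of the additional classical axiom schemata (equivalently $\phi\lor\neg\phi$, or $\neg\neg\phi\to\phi$) become inhabited, so $S\vdash_{CPL}\phi$ implies $\lo T\models\phi$ for every HoTT $\lo L$-structure $\lo T$ with $\lo T\models S$.

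Granting this, I would note that since $(V,\in)\models$ ZF, every sentence $\chi$ with ZF $\vdash_{CPL}\chi$ satisfies $(V,\in)\models\chi$; it then remains to check that each axiom of CZF and of IZF is a classical theorem of ZF. For CZF this is immediate from the Proposition that ZF and CZF$^c$ prove the same theorems: each CZF axiom is trivially provable in CZF, hence in its classical extension CZF$^c$, hence in ZF — this handles $\Sigma$, Strong Infinity, $\Delta_0$-Separation, $\in$-induction, Strong Collection and Subset Collection at once. For IZF the axioms of $\Sigma$, Infinity, Powerset, Separation and $\in$-induction (the last being Foundation) are standard theorems of ZF, and the one remaining schema, Collection, is classically derivable in ZF — for instance from Replacement together with Separation, or as the weakening of Strong Collection furnished by the Proposition that Strong Collection $\vdash_{IPL}$ Collection. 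Transporting these classical derivations through CPL-soundness then yields $(V,\in)\models$ CZF and $(V,\in)\models$ IZF.

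The main obstacle is the first step: one must justify carefully that LEM genuinely lifts the semantics to CPL, and in doing so keep track of universe levels, since the translations of unbounded $\in$-formulas inhabit $Prop_{U'}$ rather than $Prop_U$ and therefore require excluded middle at the level of $U'$ — available here either as part of the standing LEM assumption (just as in the Foundation theorem) or via PR, which renders $LEM_U$ and $LEM_{U'}$ interchangeable. Once classical soundness is secured, the remaining verifications are routine transcriptions of well-known facts of classical set theory.
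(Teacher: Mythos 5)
Your proposal is correct and follows essentially the same route as the paper, which likewise derives the corollary from the theorem that $(V,\in)\models$ ZF under LEM combined with the propositions that ZF and CZF$^c$ prove the same theorems and that Strong Collection yields Collection and Replacement; you merely make explicit the upgrade of the soundness theorem from IPL to classical logic (and the universe-level bookkeeping for $Prop_{U'}$), which the paper leaves implicit in its ``with these, one may conclude'' step. One small caveat: Collection is not classically derivable from Replacement together with Separation alone --- the standard derivation also uses Foundation via the rank hierarchy --- but since you offer this only as an aside and your alternative route (Collection as a weakening of Strong Collection, which ZF proves by the CZF$^c$ equivalence) is sound, the argument goes through.
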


This observation, although not useful in a constructive context, establishes that all three Collection axioms are consistent with HoTT. In fact, it appears that these are independent from HoTT; since they are independent in IPL from the set of axioms that were shown to hold in HoTT (even in HoTT + PR), their satisfaction in $V$ purely depends on the "outer" properties of $V$, i.e. its constructors and induction principle. These, however, only appear to reflect properties of $Set_U$, where again, these axiom do not seem to hold as is. To show their independence, one would have to construct a model of HoTT where these axioms fail in its internal cumulative hierarchy, something that goes much beyond the purposes of this work.

\end{document}